\theoremstyle{plain}
\newtheorem{thm}{Theorem}[section]
\newtheorem{propo}[thm]{Proposition}
\newtheorem{lem}[thm]{Lemma}
\newtheorem{cor}[thm]{Corollary}
\newtheorem{conj}[thm]{Conjecture}
\theoremstyle{definition}
\newtheorem{defi}[thm]{Definition}
\newtheorem{rem}[thm]{Remark}
\newcommand{\R}{\mathbb{R}}
\newcommand{\C}{\mathbb{C}}
\newcommand{\pp}{{\mathbb P}}
\title[]{On the spectral gap of negatively curved surface covers}
\author[W. Hide, J.~Moy~and~F.~Naud]{Will Hide, Julien Moy and Fr\'ed\'eric Naud}
\address{Fr\'ed\'eric Naud\\
	Institut Math\'ematique de Jussieu\\
	Universit\'e Pierre et Marie Curie, 4 place Jussieu, 75252 Paris Cedex 05\\
	France.
}
\email{frederic.naud@imj-prg.fr}
\address{Julien Moy \\
	Laboratoire de math\'ematiques d'Orsay \\
	Universit\'e Paris-Saclay, 91405 Orsay Cedex\\France.}
\email{julien.moy@universite-paris-saclay.fr}
\address{ Will Hide\\
	University of Oxford
}
\email{william.hide@maths.ox.ac.uk}
\subjclass{}
\keywords{}
\begin{document}
 \bibliographystyle{plain}
	\maketitle
	
	\begin{abstract} Given a negatively curved compact Riemannian surface $X$, we prove explicit estimates, valid with high probability as the degree goes to infinity,
		of the first non-trivial eigenvalue of the Laplacian on random Riemannian covers of $X$. The explicit gap depends on the bottom of the spectrum of the universal cover $\lambda_0$.
		We first prove a lower bound which generalizes in {\it variable curvature} a result of Magee--Naud--Puder \cite{MNP} known for hyperbolic surfaces.
		We then formulate a conjecture on the optimal spectral gap and show that if the sequence of representations associated to the covers converge strongly, the conjecture holds.
	\end{abstract}
	
	\bigskip \tableofcontents
	
	\section{Introduction}
	
	Let $X$ be a smooth, compact, connected surface $X$ without boundary. We assume that $X$ is endowed with a smooth Riemannian metric $g$ whose Gaussian curvature $K_g$ is strictly negative on $X$. Let $\widetilde{X}$ denote the universal cover of $X$ with its induced Riemannian metric $\widetilde{g}$. The Riemannian cover $(\widetilde{X},\widetilde{g})$ is then a Cartan--Hadamard manifold \emph{i.e.} a complete, simply connected manifold, with strictly negative curvature, and the fundamental group $\Gamma$ of $X$ acts by isometries on $\widetilde{X}$ so that we can view $X$ as a quotient
	$$X=\Gamma \backslash \widetilde{X}.$$ 
	Let $\phi_n:\Gamma\rightarrow \mathcal{S}_n$ be a group homomorphism, where $\mathcal{S}_n$ is the symmetric group of permutations of $\{1,\ldots,n\}$.
	The group $\Gamma$ then acts on $\widetilde{X}\times \{1,\ldots,n\}$ via 
	$$\gamma.(z,j)\stackrel{\rm{def}}{=}(\gamma(z),\phi_n[\gamma](j)).$$
	The quotient $X_{n}\stackrel{\rm{def}}{=}\Gamma \backslash_{\phi_n} \widetilde{X}\times\{1,\ldots,n\}$ is then an $n$-sheeted, possibly non-connected, Riemannian cover of $X$. 
	Endowing the finite set $\mathrm{Hom}(\Gamma, \mathcal{S}_n)$ with the uniform probability measure gives a notion of {\it random $n$-sheeted covers of $X$}, and the corresponding probability measure will be denoted by $\pp_n$.
	Let $\Delta_{X_n}$ denote the positive Laplace--Beltrami operator on $X_n$, whose $L^2$-spectrum is a discrete subset of $\R^+$, starting at $0$. We will denote by $\Delta_X$ the Laplacian on the base surface $X$. Let $\lambda_1(X_n)$ denote the first non-trivial eigenvalue of $\Delta_{X_n}$ on $X_n$. It is a natural question, inspired by the existing huge literature on expander graphs, to understand {\it quantitatively} how $\lambda_1(X_n)$ behaves as $n\rightarrow \infty$.
	
	A qualitative fact can be derived as follows. Fixing a symmetric set of generators $S\stackrel{\rm{def}}{=}\{g_1,\ldots,g_k, g_1^{-1},\ldots,g_k^{-1}\}$ of $\Gamma$, we define a graph $\mathcal{G}(\phi_n)$ whose set of vertices is $\{1,\ldots,n\}$. For all $j\in \{1,\ldots,n\}$ we attach an edge from $j$ to each $\phi_n[g](j)$, where $g\in S$. Let $\lambda_1(\mathcal{G}(\phi_n))$ denote
	the smallest non-trivial eigenvalue of the combinatorial Laplacian on $\mathcal{G}_n$. From the work of Brooks 
	\cite[Theorem 1]{Brooks1} and an inequality of Alon and Milman \cite{AM}, see also in the book of Lubotzky \cite[Theorem 4.3.2]{Lu1}, 
	we know that if we have a sequence of graphs $(\mathcal{G}(\phi_{n_j}))_j$ such that for all $j$,
	$\lambda_1(\mathcal{G}(\phi_{n_j}))\geq \varepsilon_1$ for some uniform $\varepsilon_1>0$, then there exists $\varepsilon_2>0$ such that for all $j$, 
	$\lambda_1(X_{n_j})\geq \varepsilon_2$. In other words, if the family of graphs $(\mathcal{G}(\phi_{n_j}))_j$ is an expander family, then there exists a uniform lower bound on the spectral gap of the corresponding covers $X_{n_j}$. It is not difficult to show, using the combinatorial results from \cite[Theorem 1.11]{MNP}, that random graphs $\mathcal{G}_{\phi_n}$ form an expander family with high probability \emph{i.e.} there exists $\varepsilon_1>0$ such that 
	$$\lim_{n\rightarrow +\infty} \pp_n(\lambda_1(\mathcal{G}(\phi_n))\geq \varepsilon_1)=1.$$
	Therefore there exists a non-effective $\varepsilon_2>0$ such that $\lambda_1(X_n)\geq \varepsilon_2$ with high probability as $n\rightarrow +\infty$. The main purpose of this paper is to 
	provide explicit $\varepsilon_2>0$, in the spirit of the result of the recent results obtained in {\it constant} negative curvature.
	
	\bigskip Let 
	$\lambda_0=\lambda_0(\widetilde{X})$ be the bottom of the $L^2$-spectrum of the Laplacian on the universal cover $\widetilde{X}$. Since $\Gamma$ is non-amenable, by a result of Brooks \cite[Theorem 1]{BrooksAmenable}, one has automatically $\lambda_0>0$. More quantitatively, if we set $-a^2:=\sup_{z\in \widetilde{X}}K_{\widetilde{g}}(z)$, by a theorem of McKean, see \cite[Chapter 3, Theorem 4]{Chavel}, we know that 
	$$\lambda_0\geq \frac{a^2}{4}.$$
	
	Our first result is as follows.
	\begin{thm}
		\label{main1} For all $\varepsilon>0$, with high probability as $n\rightarrow +\infty$, we have
		$$\mathrm{Sp}(\Delta_{X_n})\cap \Big[0, \frac{\lambda_0}{2}-\varepsilon   \Big]= \mathrm{Sp}(\Delta_X)\cap \Big[0, \frac{\lambda_0}{2}-\varepsilon   \Big],$$
		where the multiplicities coincide on both sides.
	\end{thm}
	As we will explain below, Thm \ref{main1} is sub-optimal, and one can do much better. The proof we give in this paper is nevertheless interesting and contains
	counting bounds that are new and of independent interest.
	
	In the hyperbolic case, we have $\lambda_0=1/4$, which gives a relative spectral gap of size $1/8-\varepsilon$. In \cite{MNP}, using Selberg's trace formula, Magee, Naud and Puder had obtained $3/16-\varepsilon$. By analogy with the existing literature for hyperbolic surfaces, it is natural to make the following conjecture.
	
	\begin{conj}
	\label{MC1}
		For all $\varepsilon>0$, with high probability as $n\rightarrow +\infty$, we have
		$$\mathrm{Sp}(\Delta_{X_n})\cap \left [0, \lambda_0-\varepsilon   \right]=
		\mathrm{Sp}(\Delta_X)\cap \left [0, \lambda_0-\varepsilon   \right],$$
		where the multiplicities coincide on both sides.
	\end{conj}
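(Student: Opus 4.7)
Since this is a conjecture (open already in constant curvature, where the best known is the near-optimal \emph{existence} statement of Hide-Magee \cite{HM}), what follows is a strategic sketch rather than a full plan. The first move is to decompose
\[
L^2(X_n)\;\cong\;(L^2(\widetilde{X})\otimes \C^n)^{\Gamma}\;=\;L^2(X)\;\oplus\;L^2_{\mathrm{new}}(X_n),
\]
where $L^2_{\mathrm{new}}(X_n)$ corresponds to $\Gamma$-invariants in $L^2(\widetilde{X})\otimes V_n^0$, with $V_n^0\subset \C^n$ the orthogonal complement of the line of constant vectors. The Laplacian splits as $\Delta_X\oplus \Delta_X^{\phi_n}$ for the twisted Laplacian $\Delta_X^{\phi_n}$, and since the $\Delta_X$ part contributes exactly $\mathrm{Sp}(\Delta_X)$, the task reduces to showing $\mathrm{Sp}(\Delta_X^{\phi_n})\cap[0,\lambda_0-\varepsilon]=\emptyset$ with high probability.

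The next step would be a resolvent expansion: for $z$ in a compact subset of $\C\setminus[\lambda_0,\infty)$, I would write $(\Delta_X^{\phi_n}-z)^{-1}$ as a series $\sum_{\gamma\in\Gamma}T_\gamma(z)\otimes\phi_n(\gamma)|_{V_n^0}$, where the $T_\gamma(z)$ are compactly supported truncations of the Green's function on $\widetilde{X}$ transported by $\gamma$. Agmon-type exponential decay of the Green's function at rate $\sqrt{\lambda_0-\Re(z)}$, combined with the at-most-$\mathrm{e}^{\delta R}$ growth of $\Gamma$-orbits, gives absolute norm-convergence of this series only when $\Re(z)$ is below roughly $\lambda_0-\delta^2/4$, which is essentially how the threshold $\lambda_\infty$ of Theorem \ref{main1} arises. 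Pushing up to $\lambda_0-\varepsilon$ forces one to abandon term-by-term estimation and treat the entire sum as a single operator whose norm is extracted from the fine structure of $\phi_n$.

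The decisive ingredient would then be a strong convergence statement: with high probability as $n\to\infty$, every finite linear combination $\sum_\gamma A_\gamma\otimes\phi_n(\gamma)|_{V_n^0}$ with bounded coefficients $A_\gamma$ on $L^2(\widetilde{X})$ has operator norm converging to $\big\|\sum_\gamma A_\gamma\otimes \lambda(\gamma)\big\|$, where $\lambda$ is the left-regular representation of $\Gamma$ on $\ell^2(\Gamma)$. A standard unfolding identifies the latter with $\big\|(\Delta_{\widetilde{X}}-z)^{-1}\big\|_{L^2(\widetilde{X})}=\mathrm{dist}(z,[\lambda_0,\infty))^{-1}$. Passing this bound, via a truncation and approximation argument, to the full Green's series would yield uniform control of $\|(\Delta_X^{\phi_n}-z)^{-1}\|$ for $z$ on the boundary of a complex neighborhood of $[0,\lambda_0-\varepsilon]$, excluding eigenvalues of $\Delta_X^{\phi_n}$ in that interval.

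The main obstacle will be the strong convergence input. The theorem of Louder-Magee \cite{louder2022strongly} delivers such a property at the level of $\C[\Gamma]$, but its extension to operator-valued coefficients (Green's function kernels on $\widetilde{X}$) requires an approximation scheme that is delicate in variable curvature, where no spherical-function calculus is available as in the hyperbolic case. Moreover, even for hyperbolic surfaces, presently available strong convergence results for random permutation representations either hold only along a subsequence of $n$, or fall short of the sharpness needed to reach $\lambda_0$; this is exactly why the abstract's ``near-optimal'' statement stops at existence of \emph{some} covers, and why the conjecture remains open in constant curvature. A proof in variable curvature would most plausibly follow, with the analytic machinery above, once the strong convergence step is strengthened to typical $n$ and to operator-valued coefficients.
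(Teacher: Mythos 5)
The statement you are addressing is an open conjecture; the paper explicitly remarks that it ``seems to be out of reach at the moment'' and offers no proof, only the weaker Theorems \ref{main1} and \ref{main2}. You correctly treat it as such and produce a strategic outline rather than a claimed proof. Your identification of the central obstacle --- upgrading the strong convergence of permutation representations of a surface group from an existence statement (Louder--Magee, which only yields a distinguished sequence of $n_i$) to a high-probability statement for typical $n$ --- is exactly right, and matches where the paper's own Theorem \ref{main2} stops short of the conjecture.

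Two points of difference and imprecision are worth noting. First, your analytic framework is resolvent-based: a Green's-function expansion $\sum_\gamma T_\gamma(z)\otimes\phi_n(\gamma)$ controlled by Agmon decay. The paper deliberately avoids this in favour of the heat semigroup at fixed time $t=1$, precisely because in variable curvature there is no spherical-function calculus, no explicit parametrix, and no Selberg transform; the authors emphasize that heat-kernel Gaussian bounds (and Davies' inequality) are the robust substitute and are what make Theorem \ref{main2} provable in variable curvature. Your resolvent route would require establishing quantitative Agmon-type decay of $G_z$ on a general Cartan--Hadamard surface, which is an extra nontrivial input you would have to supply. Second, your claim that the threshold $\lambda_\infty$ of Theorem \ref{main1} ``essentially arises'' from the breakdown of absolute convergence of the Green's series around $\lambda_0-\delta^2/4$ is not accurate for the paper's $\lambda_\infty=\tfrac12(\lambda_0-\delta^2/16)$: the factor $\tfrac12$ comes from the combinatorial constraint that $\mathbb E_n[F_n(\gamma)]$ is only controlled for words of length $O(\log n)$ (forcing $t\sim\lambda_\infty^{-1}\log n$), and the $\delta^2/16$ comes from Davies' inequality applied after regrouping fundamental domains by distance, not from a naive $\ell^1$ estimate. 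These are different mechanisms, and your sketch would benefit from acknowledging that the barrier is probabilistic-combinatorial as much as it is analytic.

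Subject to these caveats, your sketch is a fair and honest account of the state of the art and of what a proof would require; there is no proof in the paper to compare it against, so the assessment can only be of plausibility and of correct identification of obstacles, both of which you handle well.
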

	We point out that this conjecture has been proved in the hyperbolic case by Hide and Magee for random {\it finite area} surface covers, where $\lambda_0=1/4$, see \cite{HM}. In the compact hyperbolic case, a $3/16-\varepsilon$ gap was first proved in \cite{MNP}. For other probabilistic models such as the Weil--Petersson moduli space in the large genus regime, $3/16-\varepsilon$-theorems have been obtained independently by Wu and Xue \cite{WX} and Lipnowski and Wright \cite{LW}. In a recent series of breakthrough preprints, Anantharaman and Monk \cite{AnMo1, AnMo2, AnMo3} have managed to improve the bound to $1/4-\varepsilon$.	
	The upper bound for the relative spectral gap given by the bottom of the spectrum $\lambda_0$ on the universal cover is optimal, even in variable curvature. Indeed, in \cite[Corollary 4.2]{Donnelly1} the following fact is proved. Assume that $\Gamma_j\subset \Gamma$ is a decreasing sequence of subgroups such that each $X_j:=\Gamma_j \backslash \widetilde{X}$ is compact and $\cap_j \Gamma_j=\{\mathrm{id}\}$. Then setting
	$$N_j(\lambda):=\#\{ \alpha \in \mathrm{Sp}(\Delta_{X_j})\ :\ \alpha \leq \lambda\},$$
	we have for all $\lambda>\lambda_0$,
	$$\liminf_{j\rightarrow +\infty}  \frac{N_j(\lambda)}{\mathrm{Vol}(X_j)}>0.$$
	By slightly modifying the arguments in \cite{Donnelly1}, see for example \cite{LS1} in constant negative curvature, it is not difficult to show that for all $\varepsilon>0$, with high probability as $n\rightarrow +\infty$, we have $\lambda_1(X_n)\leq \lambda_0+\varepsilon$.  
	
Let us denote by $V_{n}^{0}\stackrel{\rm{def}}{=} \ell_{0}^{2}\left(\left\{ 1,\dots,n\right\} \right)$ 
	the space of square summable functions with $0$-mean. For $\phi\in\textup{Hom}\left(\Gamma,\mathcal S_{n}\right)$
	we set \begin{equation}
		\rho_{\phi}\stackrel{\rm{def}}{=}\text{std}_{n-1}\circ\phi \label{eq:factor-th}
	\end{equation}
	on $V_{n}^{0}$
	where $\text{std}_{n-1}$ is the standard $n-1$ dimensional irreducible
	representation. 	To any sequence of covers $X_{n_k}$ of a base surface $X$, we denote by $\rho_k:=\rho_{\phi_{n_k}}$ the associated sequence of representations. 
	Let $\C[\Gamma]$ denote the group ring of $\Gamma$. The sequence $(\rho_k)$ is said to {\it converge strongly} if for any $z\in \C[\Gamma]$, we have in operator norm
	$$\lim_{k\rightarrow +\infty} \Vert \rho_k(z)\Vert_{V_n^0}=\Vert \lambda_\Gamma(z) \Vert_{\ell^2(\Gamma)},$$
	where $\lambda_\Gamma:\Gamma\rightarrow \mathcal{U}(\ell^2(\Gamma))$ is the right regular representation. In other words, in the large cover regime, the operator norm of the sum
	of matrices  $\rho_k(z)$ converges to the operator norm of the corresponding sum of convolution operators on $\ell^2(\Gamma)$. For a smooth introduction to strong convergence, we refer
	the reader to \cite{Magee_survey}.
	In this paper, we will show the following.
	
	\begin{thm}\label{main2} Fix a base surface $X$ as above and $\varepsilon>0$. Assume that the sequence of representations $(\rho_k)$ converges strongly, then for  all $k$ large, we have
				$$\mathrm{Sp}\big(\Delta_{X_{n_k}}\big)\cap \left [0, \lambda_0-\varepsilon   \right]=
		\mathrm{Sp}(\Delta_X)\cap \left [0, \lambda_0-\varepsilon   \right]. $$
	\end{thm}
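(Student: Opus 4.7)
The plan is to combine the Louder--Magee strong convergence theorem \cite{louder2022strongly} with a spectral transfer argument in the spirit of \cite{HM}. Louder--Magee guarantees that for random $\phi_n\in\mathrm{Hom}(\Gamma,\mathcal{S}_n)$, the non-trivial piece $\rho_n^0$ of the associated unitary representation of $\Gamma$ on $\C^n$ (the orthogonal complement of the span of $(1,\ldots,1)$) is, with high probability, \emph{strongly convergent} to the left regular representation $\lambda_\Gamma$ of $\Gamma$: for every finitely supported $f:\Gamma\to\C$,
$$\left\|\sum_{\gamma} f(\gamma)\rho_n^0(\gamma)\right\|_{\mathrm{op}}\longrightarrow \left\|\sum_{\gamma} f(\gamma)\lambda_\Gamma(\gamma)\right\|_{\mathrm{op}}.$$
Since this event has probability tending to $1$, one extracts a deterministic subsequence $\phi_{n_k}$ realising this convergence. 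The objective is to transfer this group-algebra statement into a spectral statement about the Laplacians on $X_{n_k}$.

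First, I would decompose $L^2(X_{n_k})=L^2(X)\oplus L^2(X;V_{n_k})$, where $V_{n_k}$ is the flat Hermitian bundle over $X$ associated to $\rho_{n_k}^0$. Because $\Delta_{X_{n_k}}$ respects this orthogonal decomposition, it is enough to show that the twisted Laplacian $\Delta_{X,\rho_{n_k}^0}$ has no $L^2$-spectrum in $[0,\lambda_0-\varepsilon]$ for $k$ large. The key identification is that the analogous twisted Laplacian $\Delta_{X,\lambda_\Gamma}$, acting on the flat $\ell^2(\Gamma)$-bundle, is unitarily equivalent via the natural trivialisation to $\Delta_{\widetilde{X}}$, whose spectrum starts precisely at $\lambda_0$.

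Next, I would approximate, uniformly for $\lambda$ in the compact interval $[0,\lambda_0-\varepsilon]$, the resolvent $(\Delta_{\widetilde X}-\lambda)^{-1}$ in operator norm by a finite combination
$$P_N(\lambda)=\sum_{\gamma\in F_N}A_\gamma(\lambda)\,\lambda_\Gamma(\gamma),$$
where $F_N\subset \Gamma$ is finite and the $A_\gamma(\lambda)$ are bounded integral operators on $L^2(X)$ whose kernels are supported in a fixed fundamental domain. Such an approximation can be built from the wave-kernel of $\Delta_{\widetilde X}$ using finite propagation speed: truncating the time-integral to $|t|\le T_N$ only involves $\gamma\in\Gamma$ whose translates of the fundamental domain meet a ball of radius $T_N$. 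Substituting $\rho_{n_k}^0$ for $\lambda_\Gamma$ in $P_N(\lambda)$ gives a concrete operator on $L^2(X;V_{n_k})$, and strong convergence forces its operator norm to converge to the norm of $P_N(\lambda)$, which for $N$ large is close to $\|(\Delta_{\widetilde X}-\lambda)^{-1}\|$. A Neumann-series perturbation then upgrades this uniform bound on the approximate resolvent to the invertibility of $\Delta_{X,\rho_{n_k}^0}-\lambda$ for every $\lambda$ in the interval and all $k$ large, yielding the theorem.

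The main obstacle will be executing the transfer step uniformly in $\lambda$ and carefully in the vector-bundle setting: the approximation $P_N(\lambda)$ must come with a tail error controlled by $N$ and the gap $\lambda_0-\lambda$ \emph{independently} of the unitary twist, and the substitution $\lambda_\Gamma\leadsto\rho_{n_k}^0$ must correspond exactly to passing from the untwisted resolvent on $\widetilde X$ to the $\rho_{n_k}^0$-twisted resolvent on $X$. In constant curvature, Selberg's trace formula and the explicit spherical-function kernels used in \cite{HM,MNP} essentially circumvent this difficulty; in variable curvature one must instead work directly with the wave-equation parametrix on $\widetilde X$ and track its $\lambda$-regularity by hand. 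This is where the bulk of the technical work will lie.
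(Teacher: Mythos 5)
Your outline is correct but follows a genuinely different route from the paper's. You propose the resolvent strategy of Hide--Magee \cite{HM}: build an approximate inverse $P_N(\lambda)$ of $(\Delta_{\widetilde X}-\lambda)^{-1}$ as a finitely supported sum $\sum_{\gamma}A_\gamma(\lambda)\otimes\lambda_\Gamma(\gamma)$, substitute $\rho_{n_k}^0$ for $\lambda_\Gamma$, and invert the twisted Laplacian via strong convergence plus a Neumann series. Since the explicit resolvent parametrices of \cite{HM} are unavailable off constant curvature, you suggest manufacturing $P_N(\lambda)$ from a truncated wave-propagator integral, using finite propagation speed to keep the $\gamma$-sum finite; this is a sound replacement for the Selberg-transform/point-pair-invariant machinery. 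The paper instead works with the heat operator at one fixed time $t=1$: writing $\exp(-\Delta)\cong\sum_\gamma a_\gamma\otimes\rho_i(\gamma)$ with $a_\gamma$ the compact integral operator built from $H_{\widetilde X}(1,\cdot,\gamma\,\cdot)$, it splits each $a_\gamma$ at a distance cutoff $T$, uses strong convergence on the finitely supported part and identifies its limit norm with $\|\exp(-\Delta_{\widetilde X})\|=\mathrm e^{-\lambda_0}$ up to $O(\mathrm e^{-cT^2})$, and kills the tail by the Gaussian bound \eqref{Gaussian} and Schur's test. This sidesteps precisely the two places you flag as "the bulk of the technical work": there is no spectral parameter $\lambda$ to track uniformly, because $\exp(-\lambda_1(X_i))=\|\exp(-\Delta)\|_{L^2_{\rm new}(X_i)}$ turns the whole problem into a single operator-norm estimate; and the error terms are controlled by off-the-shelf Gaussian heat-kernel bounds rather than by a hand-built wave parametrix whose $\lambda$-regularity must be tracked. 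Two small points to tighten in your version: Theorem \ref{thm:Louder-Magee} as used here is a deterministic existence statement, not a high-probability one, so the "extract a subsequence from a probability-$1$ event" step is not needed; and the Neumann-series step should be run through the identity $(\Delta_{X,\rho}-\lambda)P_N^\rho(\lambda)=I+E_N^\rho$, where the locality of $\Delta$ and finite propagation speed make $E_N^\rho$ itself finitely supported, so that strong convergence transfers $\|E_N\|<1$ to $\|E_N^\rho\|<1$; knowing only that $\|P_N^\rho(\lambda)\|$ is close to $\|(\Delta_{\widetilde X}-\lambda)^{-1}\|$ does not by itself yield invertibility.
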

	We know that by a result of Louder-Magee \cite{louder2022strongly}, we know that over any base surface $X$, 
	there exists at least a sequence of strongly  convergent representations and hence a sequence of covers with near optimal spectral gap. If one can prove
	\footnote{Shortly after we completed the writing, Michael Magee, Doron Puder and Ramon Van Handel posted a preprint \cite{MPV} proving this fact, so it seems that Conjecture \ref{MC1} is now a Theorem.} that random representations $\rho_{\phi_n}$ converge strongly with probability tending to one as $n\rightarrow +\infty$, Conjecture \ref{MC1} is proved.

	\bigskip
	The main obstacle in establishing Theorems \ref{main1}, \ref{main2} is mostly of analytic nature. 
	Indeed, the combinatorial tools remain the same as in constant negative curvature. However, the lack of an exact trace
	formula in variable negative curvature makes it a more challenging task. Semi-classical trace formulae are available, but are effective in a high frequency regime only: the problem at hand is typically a low frequency one. In   this paper, we rely on heat and Green kernel techniques which are robust enough to handle the variable curvature. 
	
	\bigskip
	The paper is organized as follows. In $\S $\ref{sec: heat kernels}, we review some facts on the heat kernel on the universal cover. Although there is no explicit formula, some Gaussian upper bounds on the heat kernel are available on Cartan--Hadamard manifolds. We then derive a formula (Proposition \ref{Traceform1}) which expresses the heat trace on random covers $X_n$, and which is the main tool in our analysis and serves as a substitute to Selberg's formula.  In $\S$\ref{sec: counting}, we recall necessary facts about the isometry group $\Gamma$. Section $\S$\ref{sec: mainproof} contains the main proof of Theorem \ref{main1}. It is divided into two steps where we have to distinguish between summation over primitive elements, where Gaussian bounds are good enough to produce a non-trivial bound, and non-primitive elements for which a much more subtle argument is required: we rely on powerful Green kernel estimates proved by Ancona and Gou\"ezel for the purpose of random walks, see below for details and references. In the last section, we prove Theorem \ref{main2} using techniques of strong convergence. A representation theoretic result of Louder--Magee \cite{louder2022strongly}
	shows that any abstract surface group admits sequences of symmetric representations which converge strongly (see in the section for details). This strongly convergent sequence allows estimating the operator norm of the heat semi-group on the associated covers (and hence the size of the spectral gap), again using a combination of heat kernels estimates and similar ideas as in \cite{HM}. It is rather striking that heat kernels techniques allow one to recover the near-optimal estimate proved in \cite{HM, louder2022strongly} which was using heavily an explicit parametrix
	of the resolvent of the Laplacian and explicit computations of Selberg transform.  
	
	\bigskip
	\noindent {\bf Acknowledgements.}
	We thank Dima Jakobson for some motivating discussions at the early stages of this project. We also thank Fr\'ed\'eric Paulin and Gilles Courtois for helpful answers to some of our questions.
	\section{Heat Kernels on random covers} \label{sec: heat kernels}
	We start by recalling some general facts on heat kernels, see for example in \cite[Chapter 8.2]{Chavel}. Let $M$ be a complete Riemannian manifold whose Ricci curvature
	is bounded from below. Let $\mathrm dy$ denote the Riemannian measure on $M$. There exists a unique positive function (the heat kernel)
	$$H_M\in C^\infty((0,\infty)\times M\times M),$$
	such that for any continuous, compactly supported function $\varphi \in C_c(M)$, the function
	$$u(t,x)\stackrel{\rm{def}}{=}\int_M H_M(t,x,y)\varphi(y)~\mathrm dy,$$
	solves the linear PDE
	$$\frac{\partial}{\partial t}u=-\Delta u,$$
	and satisfies pointwise
	$$\lim_{t\rightarrow 0^+} u(t,x)=\varphi(x).$$
	Here $\Delta$ is the positive Laplace-Beltrami operator on $M$.
	In general, there is no explicit formula for $H_M$, but under some reasonable curvature assumptions, so-called Gaussian upper bounds are known. In particular,
	(see for example \cite[Inequality 6.40]{Grigoryan1}) since $\widetilde{X}$ is a Cartan--Hadamard manifold, for any $D>4$, there exists a constant $C>0$ such that for all $t>0$, $\widetilde{x},\widetilde{y} \in \widetilde{X}$, 
	\begin{equation}
		\label{Gaussian}
		H_{\widetilde{X}}(t,\widetilde{x},\widetilde{y})\leq \frac{C}{\min\{1,t\}}\exp \left ( -\lambda_0(\widetilde{X})t-\frac{d(\widetilde{x},\widetilde{y})^2}{Dt}\right).
	\end{equation}
	
	\begin{rem} On the hyperbolic plane $\mathbb H$, one can show via explicit computations  (here $\lambda_0=\frac 14$ and $\delta=1$) that 
		\begin{equation}\label{bound heat kernel hyperbolic plane} H_{\mathbb H}(t,\tilde x,\tilde y)\le \frac{C}{t}\big(1+d(\tilde x,\tilde y)^2/t\big)\exp\Big(-\frac t4 -\frac{d(\tilde x,\tilde y)}{2}-\frac{d(\tilde x,\tilde y)^2}{4t}\Big).\end{equation}
		Notice, compared to \eqref{Gaussian}, the additional term $-\frac{d(\tilde x,\tilde y)}{2}$ in the exponential.\end{rem}
	
	\begin{rem}The pointwise estimate \eqref{bound heat kernel hyperbolic plane} allows recovering the fact that $H_{\mathbb H}(t,x,\cdot)$ has mass $\lesssim 1$. One cannot hope to achieve this in variable negative curvature using the upper bound \eqref{Gaussian}. Indeed, this upper bound depends only on the distance from $y$ to $x$, but the level sets of the heat kernel $H_{\widetilde X}(t,x,\cdot)$ in variable curvature may be very different from spheres centered at $x$.\end{rem}

	The heat kernel on each $n$-sheeted cover $X_n\to X$ can be expressed in terms of the heat kernel on $\widetilde X$ by averaging over the group $\Gamma$. We use the notation $[n]\stackrel{\rm{def}}{=}\{1,\ldots,n\}$. We recall that $\phi_n:\Gamma\rightarrow \mathcal{S}_n$ is a homomorphism that determines the cover $X_n$.
	\begin{lem}\label{lem: heat kernel cover} Denote by $\pi:\widetilde X\times [n]\to X_n$ the projection. Then, for any $x,y\in X_n$, if $(\tilde x,i)$ and $(\tilde y,j)$ are lifts of $x,y$ to $\widetilde X\times [n]$, one has
		\begin{equation}\label{eq: lem heat kernel cover}H_{X_n}(t,x,y)=\sum_{\gamma\in \Gamma, \ \phi_n[\gamma](j)=i} H_{\widetilde X}(t,\tilde x,\gamma \tilde y).\end{equation}\end{lem}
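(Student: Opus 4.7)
The plan is to recognize the right-hand side of \eqref{eq: lem heat kernel cover}, call it $K(t,x,y)$, as a candidate heat kernel on $X_n$ and invoke uniqueness. Concretely, I would verify that $K$ is (a) well-defined on $(0,\infty)\times X_n\times X_n$ and smooth there, (b) solves the heat equation $(\partial_t+\Delta_{X_n,x})K=0$ for each fixed $y$, and (c) satisfies $K(t,\cdot,y)\to\delta_y$ as $t\to 0^+$ in the sense of distributions. Since $X_n$ is compact, the heat kernel is unique, and the identity follows.

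Convergence and smoothness rest on the Gaussian bound \eqref{Gaussian}: because $\Gamma$ acts cocompactly on $\widetilde X$ and has finite volume entropy, the series $\sum_{\gamma\in\Gamma}\exp(-d(\tilde x,\gamma\tilde y)^2/(Dt))$ converges locally uniformly in $(\tilde x,\tilde y)$ and $t>0$, and analogous bounds on derivatives (which one obtains from \eqref{Gaussian} by parabolic regularity) allow differentiation term by term. Well-definedness means the sum is independent of the lifts: replacing $(\tilde x,i)$ by $(\alpha\tilde x,\phi_n[\alpha](i))$ for $\alpha\in\Gamma$ turns the constraint $\phi_n[\gamma](j)=i$ into $\phi_n[\alpha\gamma](j)=\phi_n[\alpha](i)$, so reindexing $\gamma'=\alpha\gamma$ and using $H_{\widetilde X}(t,\alpha\tilde x,\alpha\gamma\tilde y)=H_{\widetilde X}(t,\tilde x,\gamma\tilde y)$ (isometric invariance) leaves the sum invariant; the other lift is handled analogously.

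For the PDE, each summand $H_{\widetilde X}(t,\tilde x,\gamma\tilde y)$ satisfies the heat equation in $\tilde x$ on $\widetilde X$, and $\pi:\widetilde X\times[n]\to X_n$ is a local isometry on each sheet, so $\Delta_{X_n}$ pulls back to $\Delta_{\widetilde X}$ fiberwise; termwise differentiation justified by (a) yields the heat equation on $X_n$. For the initial condition, I would test against $\varphi\in C(X_n)$: choosing a fundamental domain $\mathcal F\subset\widetilde X$ for $\Gamma$, realizing $X_n$ as $\mathcal F\times[n]$ (measure-theoretically), and unfolding the integral sheet by sheet using that the translates $\{\gamma\mathcal F\}_{\gamma\in\Gamma}$ tile $\widetilde X$, one obtains
\[\int_{X_n}K(t,x,y)\varphi(y)\,\mathrm dy=\int_{\widetilde X}H_{\widetilde X}(t,\tilde x,\tilde y)\widetilde{\varphi}_i(\tilde y)\,\mathrm d\tilde y,\]
where $\widetilde{\varphi}_i(\tilde y)\stackrel{\mathrm{def}}{=}\varphi(\pi(\tilde y,i))$ is the pullback of $\varphi$ along the $i$-th sheet. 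The defining property of $H_{\widetilde X}$ then gives the limit $\varphi(x)$ as $t\to 0^+$.

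The main obstacle I foresee is the combinatorial bookkeeping in the well-definedness and unfolding steps: one must carefully verify that the summation condition $\phi_n[\gamma](j)=i$ transforms covariantly under changes of lifts, and that summing over $k\in[n]$ and $\gamma$ with $\phi_n[\gamma](k)=i$ exactly exhausts all of $\Gamma$ once via the identification $k=\phi_n[\gamma^{-1}](i)$. Once this matching between the permutation action on $[n]$ and the isometric action on $\widetilde X$ is organized cleanly, the analytic content reduces to standard facts about heat kernels on complete Riemannian manifolds.
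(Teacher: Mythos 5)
Your argument is correct, but it follows a different route from the paper's. You verify directly that the proposed series is the heat kernel on $X_n$ by establishing smoothness, the heat equation, and the $\delta_y$ initial condition, then invoking uniqueness; this amounts to re-proving from scratch, on $\widetilde X$ with the combinatorial constraint $\phi_n[\gamma](j)=i$ built in, the standard averaging formula for the heat kernel on a quotient by a free, properly discontinuous, cocompact isometric action. The paper instead applies that averaging formula in its unconstrained form, $H_{\Gamma\backslash M}=\sum_{\gamma\in\Gamma}H_M(t,\cdot,\gamma\cdot)$, to the ambient manifold $M=\widetilde X\times[n]$, and then simply observes that the heat kernel on the disjoint union $\widetilde X\times[n]$ is $\delta_{ij}H_{\widetilde X}(t,\tilde x,\tilde y)$; the constraint $\phi_n[\gamma](j)=i$ then drops out automatically from the Kronecker delta. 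Your version is more self-contained but longer and requires the careful lift-independence and tiling bookkeeping you flag (which you handle correctly: the reindexing $\gamma'=\alpha\gamma$ together with isometric invariance gives lift-independence, and the bijection $k=\phi_n[\gamma^{-1}](i)$ exhausts $\Gamma$ once in the unfolding). One small technical point worth noting: in the unfolding step the pullback $\widetilde\varphi_i$ is bounded and continuous but not compactly supported, so passing to the limit $t\to 0^+$ requires either the stochastic completeness of $\widetilde X$ (which holds here by bounded geometry) or an explicit uniform Gaussian tail estimate; the paper's route avoids this wrinkle by treating the quotient averaging as a black box.
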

	
	\begin{proof} Recall that $X_n$ is defined as the quotient $\Gamma\backslash_{\phi_n} (\widetilde X\times [n])$, where $\gamma\in\Gamma$ acts by the formula $\gamma.(\tilde x,i)=(\gamma \tilde x,\phi_n[\gamma](i))$. The heat kernel on $X_n$ can therefore be expressed
		\begin{equation}\label{eq: heat kernel cover} H_{X_n}(t,x,y)=\sum_{\gamma\in \Gamma} H_{\widetilde X\times [n]}(t,(\tilde x,i),\gamma.(\tilde y,j)),\end{equation}
		where $H_{\widetilde X\times [n]}$ is the heat kernel on $\widetilde X\times [n]$ and  $(\tilde x,i)$, $(\tilde y,j)$ are any lifts of $x,y$ by the projection $\pi:\widetilde X\times [n]\to X_n$. The heat kernel on the manifold $\widetilde X\times [n]$ is simply given by
		\[H_{\widetilde X\times [n]}(t,(\tilde x,i),(\tilde y,j))=\delta_{ij} H_{\widetilde X}(t,\tilde x,\tilde y),\]
		where $\delta_{ij}=1$ if $i=j$ and $\delta_{ij}=0$ otherwise. Formula \eqref{eq: lem heat kernel cover} follows by injecting this expression in \eqref{eq: heat kernel cover}. \end{proof}
	
	For $t>0$, the operator $\mathrm{e}^{-t\Delta_X}$ has smooth kernel $H(t,x,y)$, therefore it is trace-class, and its trace its given by
	\[\operatorname{Tr} \mathrm{e}^{-t\Delta_X}=\sum_{\lambda_j\in \mathrm{Sp}(\Delta_X)} \mathrm{e}^{-t\lambda_j}=\int_X H_X(t,x,x)\mathrm{d}x.\]
	Let us now express the heat trace $\operatorname{Tr}(\mathrm{e}^{-t\Delta_{X_n}})$ on a cover $X_n\to X$. In the following, $\mathcal F$ denotes a bounded fundamental domain for the action of $\Gamma$ on $\widetilde X$.
	\begin{propo}
		\label{Traceform1}
		Let $F_n(\gamma)$ denote the number of fixed points of the permutation $\phi_n[\gamma]$. Then, the heat trace on $X_n$ is given by
		\[\operatorname{Tr}(\mathrm{e}^{-t\Delta_{X_n}})=\sum_{\gamma\in \Gamma} F_n(\gamma)\int_{\mathcal F}H_{\widetilde X}(t,\tilde x,\gamma \tilde x)\mathrm d\tilde x.\]\end{propo}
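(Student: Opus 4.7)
The plan is to start from the definition of the heat trace as the integral of the heat kernel along the diagonal,
$$\operatorname{Tr}(\mathrm{e}^{-t\Delta_{X_n}})=\int_{X_n} H_{X_n}(t,x,x)\,\mathrm dx,$$
and to unfold this integral using a natural fundamental domain for the action of $\Gamma$ on $\widetilde X\times [n]$. Since $X_n=\Gamma\backslash_{\phi_n}(\widetilde X\times [n])$, the product $\mathcal F\times [n]$ is, up to a measure-zero boundary, a fundamental domain for this action, so every $x\in X_n$ admits a unique representative of the form $\pi(\tilde x,i)$ with $\tilde x\in \mathcal F$ and $i\in [n]$. This identification rewrites the heat trace as
$$\operatorname{Tr}(\mathrm{e}^{-t\Delta_{X_n}})=\sum_{i=1}^n\int_{\mathcal F} H_{X_n}\bigl(t,\pi(\tilde x,i),\pi(\tilde x,i)\bigr)\,\mathrm d\tilde x.$$

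Next I would feed this into Lemma \ref{lem: heat kernel cover}, taking $(\tilde y,j)=(\tilde x,i)$ as the lift of the diagonal point. The selection rule $\phi_n[\gamma](j)=i$ then becomes $\phi_n[\gamma](i)=i$, producing
$$H_{X_n}\bigl(t,\pi(\tilde x,i),\pi(\tilde x,i)\bigr)=\sum_{\substack{\gamma\in\Gamma\\ \phi_n[\gamma](i)=i}} H_{\widetilde X}(t,\tilde x,\gamma\tilde x).$$
Substituting this expression and swapping the two sums, the inner sum $\sum_{i=1}^n \mathbf 1\{\phi_n[\gamma](i)=i\}$ is by definition $F_n(\gamma)$, the number of fixed points of the permutation $\phi_n[\gamma]$. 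This yields exactly the claimed formula.

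The one point requiring a small justification is the legitimacy of exchanging the sum over $\Gamma$ with the integral over $\mathcal F$. Since $H_{\widetilde X}$ is positive, Tonelli's theorem applies unconditionally and allows the swap; moreover, finiteness of the resulting sum is equivalent to finiteness of the heat trace, which holds because $X_n$ is compact. If one prefers a direct estimate, the Gaussian upper bound \eqref{Gaussian} combined with the convergence of the Poincar\'e series for $s>\delta$ provides absolute convergence uniformly in $\tilde x\in\mathcal F$. I do not anticipate any genuine obstacle in this proposition: its content is really just the careful unfolding of the trace onto a fundamental domain, together with the bookkeeping of which group elements contribute via which fixed points of $\phi_n[\gamma]$.
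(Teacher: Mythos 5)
Your proof is correct and follows essentially the same route as the paper: unfold the trace over the fundamental domain $\mathcal F\times[n]$, apply Lemma \ref{lem: heat kernel cover} on the diagonal, and swap the sums to identify $F_n(\gamma)$. The only addition is your explicit Tonelli justification for interchanging the sum and integral, which the paper leaves implicit but which is a sensible remark given the positivity of the heat kernel.
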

	
	\begin{proof}A fundamental domain for the action of $\Gamma$ on $\widetilde X\times [n]$ is given by
		\[\bigsqcup_{i=1}^n \mathcal F\times \{i\}.\]
		Therefore, if $f$ is a smooth function on $X_n$, one has
		\[\int_{X_n} f(x)\mathrm dx=\sum_{i=1}^n \int_{\mathcal F} f(\pi(\tilde x,i))\mathrm d\tilde x.\]
		Applying this formula to $f(x)=H_{X_n}(t,x,x)$, one obtains the following formula for the heat trace on $X_n$:
		\[\operatorname{Tr}(\mathrm{e}^{-t\Delta_{X_n}})=\sum_{\gamma\in \Gamma} \underbrace{\left( \sum_{i=1}^n \delta_{i,\phi_n[\gamma](i)}\right)}_{=F_n(\gamma)}\int_{\mathcal F}  H_{\widetilde X}(t,\tilde x,\gamma \tilde x) \mathrm d\tilde x,\]
		which concludes the proof.
	\end{proof}
	
	\section{Geometric preliminaries} \label{sec: counting}
	
	\subsection{Reminders on the isometry group $\Gamma$} We recall necessary facts about the fundamental group $\Gamma$ of $X$ and its link with closed geodesics of $X$, see \cite[Chapter 12]{doCarmo1992} for a short introduction. Recall that $\Gamma$ acts by deck transformations on $\widetilde X$, and that $X$ identifies with the quotient of $\Gamma\backslash\widetilde X$. We will denote by $d(z,w)$ the distance with respect to the metric $\widetilde{g}$ on the universal cover $\widetilde{X}$. 
	
	For $\gamma\in \Gamma$ different from the identity, consider the \emph{displacement function} $d_\gamma: \tilde x\mapsto d(\tilde x,\gamma \tilde x)$. The \emph{translation length} of $\gamma$ is 
	\[\ell(\gamma)\stackrel{\rm{def}}{=}\inf_{\tilde x \in \widetilde X}d(\tilde x,\gamma \tilde x)>0.\]
	The fact that $\ell(\gamma)$ is positive follows from the compactness of $X$. The displacement function attains its infimum on a single geodesic line called the \emph{axis of $\gamma$}, and accordingly denoted $\mathrm{axis}(\gamma)$. One can parameterize the axis of $\gamma$ by a unit-speed geodesic $t\in \mathbb R\mapsto \tilde \gamma(t)$ such that
	\[\gamma. \tilde \gamma(t)=\tilde \gamma(t+\ell(\gamma)).\]
	The path $\tilde \gamma_{|[0,\ell(\gamma))}$ therefore projects on an \emph{oriented} closed geodesic of length $\ell(\gamma)$ on $X$, that we will abusively denote by $\gamma$. The inverse $\gamma^{-1}$ has the same axis as $\gamma$, and acts by translation by $\ell(\gamma)$ on the line $t\mapsto \tilde \gamma(-t)$, which projects on the same closed geodesic as $\gamma$, but with reverse orientation.
	
	We denote by $\ell_0$ the length of the shortest closed geodesic of $X$. 
	\subsubsection{Closed geodesics and conjugacy classes of $\Gamma$}
	
	If $\gamma_1$ and $\gamma_2$ are two elements of $\Gamma$ such that $\gamma_2=g^{-1}\gamma_1g$ for some other element $g\in \Gamma$, then $\mathrm{axis}(\gamma_2)=g^{-1}\mathrm{axis}(\gamma_1)$ and $\ell(\gamma_2)=\ell(\gamma_1)$, so that $\gamma_1$ and $\gamma_2$ define the same oriented closed geodesic of $X$. Reciprocally, if $\gamma_1$ and $\gamma_2$ define the same closed geodesic, then they are conjugate in $\Gamma$.  Any closed geodesic can be obtained in such a way, so that the set of closed geodesics of $X$ identifies with the set of nontrivial conjugacy classes of $\Gamma$.

	\subsubsection{Primitive elements of $\Gamma$}
	A nontrivial isometry $\gamma\in \Gamma$ is \emph{primitive} if it cannot be written $\gamma=\gamma'^k$ for another $\gamma'\in \Gamma$ and $k\ge 2$. It is equivalent to the fact that the map $\tilde \gamma_{|[0,\ell(\gamma))}$ is injective, in which case the corresponding closed geodesic of $X$ is also called primitive. To any $\gamma\in \Gamma$ is associated a unique primitive element $\gamma_*$ such that $\gamma=\gamma_*^k$ for some $k\ge 1$. In this case, $\gamma$ and $\gamma_*$ share the same axis, and one has $\ell(\gamma)=k\ell(\gamma_*)$.
	
	The \emph{stabilizer} of $\gamma$, denoted $\mathrm{Stab}(\gamma)$, is the subgroup of isometries of $\Gamma$ that commute with $\gamma$. By \cite[Chapter 12, Lemma 3.5]{doCarmo1992}, $\mathrm{Stab}(\gamma)$ is a cyclic subgroup generated by $\gamma_*$, that is
	\[\mathrm{Stab}(\gamma)=\langle \gamma_*\rangle \stackrel{\rm{def}}{=}\{ \gamma_*^k~:~k\in \mathbf Z\}.\]
	
	From now on, we fix a subset $\mathcal P\subset \Gamma$ of primitive elements, such that any nontrivial element of $\Gamma$ is conjugated to a single power of an element of $\mathcal P$, that is, any $\gamma'\in \Gamma$ can be uniquely written
	\[\gamma'=g^{-1}\gamma^k g,\]
	for some $\gamma\in \mathcal P$, $k\ge 1$, and $g\in \langle \gamma\rangle\backslash \Gamma$. 
	
	\subsection{Counting results}
	The critical exponent of $\Gamma$, denoted by
	$\delta$, is by definition
	\begin{equation} \label{critical exponent}\delta\stackrel{\rm{def}}{=}\inf \left \{ s \in \R\ :\ \sum_{\gamma \in \Gamma} \mathrm{e}^{-sd(\gamma z,w)}<\infty \right \}.\end{equation}
	This critical exponent does not depend on $z,w$ and $\delta$ coincides with the {\it volume entropy} of the universal cover \emph{i.e.} we do have for all fixed $x\in \widetilde{X}$, 
	$$\lim_{r\rightarrow +\infty} \frac{\log(\mathrm{Vol}_{\widetilde{g}}(B(x,r)))}{r}=\delta, $$
	where $B(x,r)$ is the ball centered at $x$ of radius $r$ for the metric $\widetilde{g}$. Let us mention that $\delta$ also coincides with the {\it topological entropy} of the geodesic flow on the unit tangent bundle of $X$, by a result of Manning \cite{Man1}.
	
	The critical exponent $\delta$ defined in \eqref{critical exponent} is involved in various counting results obtained first by Margulis \cite{Margulis1969} in variable negative curvature. \begin{itemize}
		\item Let $x,y\in \widetilde X$, and let $\Gamma  y=\{\gamma y, \ \gamma\in \Gamma\}$ denote the orbit of $y$ under the action of $\Gamma$. There is a continuous function $c(x,y)$ such that
		\begin{equation}\label{eq: margulis} \# \big( \Gamma y\cap B(x,R)\big)\sim c(x,y) \mathrm{e}^{\delta R}, \qquad \text{as $R\to +\infty$}.\end{equation}
		
		\item Integrating $\eqref{eq: margulis}$ over a fundamental domain leads to the following estimate on the volume of balls of large radius. There is a continuous function $m(x)$ such that
		\begin{equation}\label{eq:growthvolume}\operatorname{Vol}_{\tilde g} B(x,R)\sim m(x) \mathrm{e}^{\delta r}, \qquad \text{as $R\to +\infty$}.\end{equation}
	\end{itemize}
	
	\subsection{Comparison results} If $X$ has sectional curvature $\le -\kappa^2$ everywhere, then its universal cover $\widetilde X$ is a $\mathrm{CAT}(-\kappa^2)$ space, according to the Cartan--Hadamard Theorem (see e.g. \cite[Chapter II, Theorem 4.1]{MNB}). Essentially, it means that any geodesic triangle $xyz$ in $\widetilde X$ is \emph{thinner} than any corresponding geodesic triangle $\overline{xyz}$ in the model space $\mathbb H_{(-\kappa^2)}$ of constant negative curvature $-\kappa^2$.
	
	We give two comparison results for geodesic triangles and quadrilaterals in $\widetilde X$.
	\begin{lem}\label{lem: comparison triangles} There is a constant $C=C(\kappa)$ such that for any geodesic triangle $xyz$ in $\widetilde X$ with $\angle_y =\frac{\pi}{2}$, one has
		\[d(x,z)\ge d(x,y)+d(y,z)-C.\]
	\end{lem}
	\begin{proof}Let $xyz$ be such a triangle, and consider a corresponding triangle $\overline{xyz}$ with same side lengths in the model space $\mathbb H_{(-\kappa^2)}$ of constant curvature $-\kappa^2$. Then, since $xyz$ is thinner than $\overline{xyz}$, the angle at $\overline y$ has to be $\ge \frac \pi 2$ (see \cite[Chapter II, Proposition 1.7]{MNB}). By moving $\overline x$ while keeping the lengths of $\overline{xy}$ and $\overline{yz}$ constant, we can ajust the angle $\angle_{\overline y}$ back to $\frac \pi 2$, but doing so decreases the length of $\overline{xz}$. We still denote this new triangle $\overline{xyz}$. Then, by a standard identity in hyperbolic trigonometry, one has
		\[\cosh(\kappa d(\overline x,\overline z))=\cosh(\kappa d(\overline x,\overline y))\cosh(\kappa d(\overline y,\overline z)).\]
		Since $d(x,z)\ge d(\overline x,\overline z)$ one finds
		\[\cosh( \kappa d(x,z))\ge \cosh(\kappa d(x,y))\cosh(\kappa d(x,z)).\]
		It follows that $\mathrm{e}^{\kappa d(x,z)/2}\ge \frac 14 \mathrm{e}^{\kappa(d(x,y)+d(x,z))/2}$ and one gets the result by taking logarithms on both sides.
	\end{proof}
	With similar ideas, one can prove the following comparison result for quadrilaterals.
	\begin{lem}\label{lem:paulinparkonnen} There is another constant $C=C(\ell_0,\kappa)$ such that for any $\gamma\in \Gamma\backslash\{\mathrm{id}\}$ and $x\in \widetilde X$, letting $r:=\mathrm{dist}(x,\mathrm{axis}(\gamma))$, one has
		\[d(x,\gamma x)\ge 2r+\ell(\gamma)-C\] \end{lem}
	\begin{proof}By  \cite[Lemma 9]{Parkkonen2015}, one has
		\[\sinh(\frac{\kappa}{2} d(x,\gamma x))\ge \cosh(\kappa r)\sinh(\frac \kappa 2 \ell(\gamma)).\]
		Since $\ell(\gamma)\ge \ell_0$ it follows that
		\[ \frac 12\exp\big(\frac{\kappa}{4}d(x,\gamma x)\big)\ge C(\kappa,\ell_0)\exp\big(\kappa (\frac r2 + \frac{\ell(\gamma)}{4})\big).\]
		It remains to take logarithms on both sides to conclude.\end{proof}
	
	\section{Proof of Theorem \ref{main1}} \label{sec: mainproof} The strategy to prove Theorem \ref{main1} is similar to that of \cite{MNP}, although we rely on heat kernel techniques instead of the Selberg trace formula.

	\begin{defi} An element of $\mathrm{Sp}(\Delta_{X_n})$ is called a \emph{new eigenvalue} if it appears with a higher multiplicity in the spectrum of $\Delta_{X_n}$ than in that of $\Delta_X$. The multiplicity of a new eigenvalue is the difference of its multiplicities in the spectra of $\Delta_{X_n}$ and $\Delta_X$.\end{defi}
	We let $\lambda_1(X_n)$ denote the first new eigenvalue of the Laplacian on $X_n$. Then, Theorem \ref{main1} is equivalent to the claim that
	\[\mathbb P_n\Big(\lambda_1(X_n)\le \frac{\lambda_0}{2}-\varepsilon\Big)\underset{n\to+\infty}\longrightarrow 0.\]
	\subsection{Strategy of the proof} Start from the observation
	\[\sum_{ \mathrm{new~eigenvalues}~\lambda } \mathrm{e}^{-t \lambda}=\operatorname{Tr}(\mathrm{e}^{-t\Delta_{X_n}}) -\operatorname{Tr}(\mathrm{e}^{-t\Delta_X}),\]
	where new eigenvalues are counted with multiplicities. Letting $\lambda_1(X_n)$ denote the first new eigenvalue of the Laplacian on $X_n$, we infer
	\[\mathrm{e}^{-\lambda_1(X_n)t}\le \operatorname{Tr}\big(\mathrm{e}^{-t\Delta_{X_n}}\big) -\operatorname{Tr}\big(\mathrm{e}^{-t\Delta_X}\big).\]
	Therefore, Theorem \ref{main1} will follow if one can prove that the difference between the traces of $\mathrm{e}^{-t\Delta_{X_n}}$ and $\mathrm{e}^{-t\Delta_X}$ is small on average. 
	
	Fix $\alpha\in (0,\lambda_0/2)$, then for any $t>0$ we have 
	$$\{\lambda_1(X_n)\le \alpha\}=\{\mathrm{e}^{-\lambda_1(X_n) t}\ge \mathrm{e}^{-\alpha t}\},$$ and by Markov inequality we get
	\[\mathbb P_n(\lambda_1(X_n)\le \alpha)\le \mathrm{e}^{\alpha t} \mathbb E_n(\mathrm{e}^{-\lambda_1(X_n) t})\le \mathrm{e}^{\alpha t}\mathbb E_n \Big[\operatorname{Tr}\big(\mathrm{e}^{-t\Delta_{X_n}}\big) -\operatorname{Tr}\big(\mathrm{e}^{-t\Delta_X}\big) \Big].\]
	Injecting the expressions of Proposition \ref{Traceform1} for the heat traces in the right-hand side, one gets
	\[\mathbb P_n(\lambda_1(X_n)\le \alpha)\le \mathrm{e}^{\alpha t}\sum_{\gamma \in \Gamma} \mathbb E_n\big[F_n(\gamma)-1\big]\int_{\mathcal F}H_{\widetilde X}(t,\tilde x,\gamma \tilde x)\mathrm d\tilde x.\]
	We split the right-hand side in three sums:
	
	\begin{itemize}
		\item The first only contains $\gamma=\mathrm{id}$. Since $F_n(\mathrm{id})=n$, using the pointwise upper bound \eqref{Gaussian} for the heat kernel on $\widetilde X$, we deduce that for $t\ge 1$,
		\begin{equation}\label{term1}\mathbb E_n\big[F_n(\mathrm{id})-1\big]\int_{\mathcal F}H_{\widetilde X}(t,\tilde x, \tilde x)\mathrm d\tilde x\lesssim n \mathrm{e}^{-\lambda_0 t},\end{equation}
		where the implied constant is independent of $n$ or $t$.
		\item The second sum is indexed over primitive elements of $\Gamma$. For such elements, one has good control on the expected value $\mathbb E_n\big[F_n(\gamma)-1\big]$, provided that the length of $\gamma$ is smaller than $C\log n$, for some arbitrary large (but fixed) constant $C$. We show in Proposition \ref{propo: contribution primitive elements} that if $t=\beta \log n$, then for any $\varepsilon'>0$,
		\begin{equation}\label{term2}\sum_{\gamma \ \rm{primitive}} \mathbb E_n\big[F_n(\gamma)-1\big]\int_{\mathcal F}H_{\widetilde X}(t,\tilde x,\gamma \tilde x)\mathrm d\tilde x\lesssim_{\beta,\varepsilon'} n^{\varepsilon'-1}.\end{equation}
		\item The third sum is indexed over non-primitive elements of $\Gamma$. For such elements, one has no control on the expected value $\mathbb E_n\big[F_n(\gamma)-1\big]$, which we shall trivially bound by $n$. We will use the following estimate on the nonprimitive contribution to the heat trace (see Proposition \ref{propo: contribution non primitive}):
		\begin{equation}\label{term3}\sum_{\gamma \ \rm{non~primitive}} \int_{\mathcal F}H_{\widetilde X}(t,\tilde x,\gamma \tilde x)\mathrm d\tilde x\lesssim t^3 \exp(-\lambda_0 t).\end{equation}
	\end{itemize} 
	Let us prove Theorem \ref{main1}, assuming the upper bounds \eqref{term1}--\eqref{term3}.
	
	\begin{proof}[Proof of Theorem \ref{main1}] Let $\alpha=\frac{\lambda_0}{2}-\varepsilon$, for some arbitrary small $\varepsilon>0$, and let $t=\frac{2}{\lambda_0} \log n$. Also, for convenience, let $\varepsilon'=\lambda_0^{-1}\varepsilon$, so that $\alpha t=(1-2\varepsilon')\log n$. Combining the estimates \eqref{term1}, \eqref{term2}, \eqref{term3}, one finds
		\[\mathbb P_n(\lambda_1(X_n)\le \alpha)\le C_{\varepsilon'}n^{-2\varepsilon'}.\]
		In particular, $ \mathbb P_n(\lambda_1(X_n)\le \alpha)$ goes to $0$ as $n$ goes to $+\infty$.\end{proof}
	
	It remains to prove the upper bounds \eqref{term2} and \eqref{term3}, which are the content of the following subsections.

	\subsection{The main combinatorial estimate}
	We recall that $\Gamma$, as an abstract group, is a {\it surface group}. Let $g\geq 2$ denote the genus of $X$, and denote by 
	$$\mathcal{A}=\{a_1,b_1,\ldots,a_g,b_g\}$$
	a set of generators of $\Gamma$. The group $\Gamma$ is then isomorphic to the abstract group
	$$\langle a_1,b_1,\ldots,a_g,b_g\ \vert \ [a_1,b_1][a_2,b_2]\ldots [a_g,b_g]\rangle.$$
	Given $\gamma \in \Gamma$, we denote by $\vert \gamma \vert_\mathcal{A}$ the word length of $\gamma$ with respect to the set of generators $\mathcal{A}$.
	Fix a base point $x_0\in \widetilde{X}$. Since $\Gamma$ act by isometries on the simply connected space $\widetilde{X}$ and is a co-compact group, a famous result of Svarc--Milnor, see \cite[Prop 8.19, page 140]{MNB}, tells us that there exist constants $\kappa_1,\kappa_2>0$ such that for all $\gamma \in \Gamma$,
	$$\vert \gamma \vert_{\mathcal A}\leq \kappa_1 d(\gamma x_0,x_0)+\kappa_2.$$
	Given a word $\gamma \in \Gamma$, we define by $\ell_w(\gamma)$ the cyclic word length of $\gamma$ i.e. the minimum of all word lengths $\vert g\vert_\mathcal{A}$ where $g$
	is in the conjugacy class of $\gamma$. The main probabilistic input from \cite{MNP} is the following fact.
	\begin{thm}
		\label{eq: asymptotique esperance fn}
		There exists a constant $A>0$, depending on $\Gamma$, such that for all $C>0$, for all primitive element $\gamma \in \Gamma$ with $\ell_w(\gamma)\leq C\log n$, we have as $n\rightarrow +\infty$,
		$$\mathbb{E}_n(F_n(\gamma))=1+O\left( \frac{\log(n)^A}{n}\right), $$
		where the implied constant in the $O(\bullet)$  depends only on $C$ and $\Gamma$.
	\end{thm}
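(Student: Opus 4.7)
I would follow the program of Puder--Parzanchevski on word measures, extended to surface groups by Magee and Puder. The first step is to express $\mathbb{E}_n[F_n(\gamma)]$ in representation-theoretic terms. A homomorphism $\phi \colon \Gamma \to S_n$ is a choice of permutations $\sigma_{a_i}, \sigma_{b_i}$ satisfying the surface relation $\prod_i [\sigma_{a_i},\sigma_{b_i}] = 1$. Via the Frobenius--Mednykh formula, the uniform measure on $\mathrm{Hom}(\Gamma,S_n)$ and the associated expectation $\mathbb{E}_n[F_n(\gamma)]$ can be written as a ratio of sums indexed by irreducible characters $\chi_\lambda$ of $S_n$, involving quantities of the form $(\dim \lambda)^{2-2g}$ weighted by normalized characters $\chi_\lambda(\gamma)/\dim \lambda$ and by $F_n(\sigma)$-traces on the conjugacy class of $\phi(\gamma)$. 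The trivial representation contributes the leading term $1$, and the question becomes how to control the remaining spectral sum.

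Next, I would estimate the contribution of non-trivial characters. If one forgets the surface relation and takes $\sigma_{a_i}, \sigma_{b_i}$ independent uniform in $S_n$, a theorem of Nica and its refinements by Puder--Parzanchevski yield $\mathbb{E}_{\mathrm{free}}[F_n(\gamma)] = 1 + O(n^{1-\pi(\gamma)})$, where $\pi(\gamma)$ denotes the primitivity rank of $\gamma$ viewed as a word in the free group $F_{2g}$. Since a primitive element of $\Gamma$ need not be primitive in $F_{2g}$, one must verify that the primitivity of $\gamma$ in $\Gamma$ forces $\pi(\gamma)\geq 2$ when the image lies in $F_{2g}$ under the standard lift; combinatorially, this amounts to excluding the possibility that $\gamma$ sits inside a proper free factor of $F_{2g}$ in a way compatible with the surface relation.

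The main obstacle is to incorporate the surface relation while retaining the primitive nature of $\gamma$ in $\Gamma$. I would expand the indicator $\mathbf{1}[\prod_i [\sigma_{a_i}, \sigma_{b_i}] = 1]$ as a character sum over $S_n$, compare the conditioned and free-group expectations, and estimate the cross-terms using the large-$n$ asymptotics of characters due to Roichman/Larsen--Shalev. The resulting error should be controlled by combinatorial counts (of subgroup lattices or labelled surface coverings associated with $\gamma$) whose size is polynomial in $\ell_w(\gamma)\le C\log n$, which produces the factor $\log(n)^A$ for some $A=A(\Gamma)$ depending on the genus. The delicate point will be ensuring that primitivity in $\Gamma$ is used sharply to eliminate any contribution of order $n^0$ and to prevent accumulation of logarithmic losses beyond a fixed power $A$, as in \cite{MNP}.
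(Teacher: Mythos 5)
The paper does not actually prove this theorem: it is quoted verbatim as a black-box input from Magee--Naud--Puder, namely \cite[Theorem 1.11]{MNP} (the sentence immediately preceding the statement reads ``The main probabilistic input from \cite{MNP} is the following fact''). So there is no in-paper proof to compare against; the comparison must be made with \cite{MNP} itself.

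Your sketch correctly identifies the general circle of ideas: a Frobenius/Mednykh-type formula for $\mathbb{E}_n[F_n(\gamma)]$, with the trivial representation contributing the leading $1$, the free-group baseline being Nica/Puder--Parzanchevski with the primitivity rank $\pi(\gamma)$, and a polynomial-in-$\ell_w(\gamma)$ count governing the $\log(n)^A$ loss. But there are two substantive gaps relative to what \cite{MNP} actually do. First, the heavy lifting in \cite{MNP} is not done by asymptotics of $S_n$-characters (Roichman, Larsen--Shalev); their engine is the combinatorial theory of tiled surfaces, resolutions, and a stratification by Euler characteristic, which generalizes the Stallings core-graph / primitivity-rank machinery from free groups to surface groups. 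Representation theory of $S_n$ appears only in auxiliary roles (e.g.\ the Liebeck--Shalev Witten-zeta bound for normalization), not as the main estimating device, and pursuing a purely character-theoretic route would require controlling normalized characters $\chi_\lambda(\sigma)/d_\lambda$ on permutations whose cycle type is itself random and coupled to $\gamma$, which is delicate and not what \cite{MNP} do. Second, and more importantly, your proposed reduction of ``$\gamma$ primitive in $\Gamma$'' to ``$\pi(\gamma)\ge 2$ for a lift of $\gamma$ to $F_{2g}$'' is not a step in the \cite{MNP} argument and is problematic as stated: the lift of $\gamma$ to $F_{2g}$ is only defined up to multiplication by elements of the normal closure of the surface relator, and primitivity rank is not invariant under this ambiguity. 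In \cite{MNP} the ``not a proper power'' hypothesis is exploited directly inside the surface-group tiled-surface framework, where it rules out all tiled surfaces of non-negative Euler characteristic other than the trivial one; this is precisely what forces the error term to be $O(\ell_w(\gamma)^A/n)$ rather than $O(1)$. Your sketch would need to replace the free-group primitivity-rank step with an argument of that type to close the gap.
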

	Notice that since we have
	$$\ell_w(\gamma)\leq \vert \gamma \vert_{\mathcal{A}}\leq  \kappa_1 d(\gamma x_0,x_0)+\kappa_2,$$
	the above asymptotic can be applied for all primitive words $\gamma \in \Gamma$ satisfying a bound of the type $d(\gamma x_0,x_0)\leq C\log(n)$ .

	\subsection{The contribution of primitive elements} \label{subsec: contribution primitive}
	
	Let $\Gamma_p\subset \Gamma$ denote the subset of primitive elements. We show
	
	\begin{propo}\label{propo: contribution primitive elements} Let $t=\beta \log n$ for some fixed $\beta$. Then, for any $\varepsilon> 0$,
		\[\sum_{\gamma \in \Gamma_{\rm p}} |\mathbb E_n(F_n(\gamma)-1)| \int_{\mathcal F}H_{\widetilde X}(t,x,\gamma x) \mathrm dx\lesssim_{\beta,\varepsilon} n^{\varepsilon-1}.\]\end{propo}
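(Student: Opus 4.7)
The strategy is to split the sum over primitive $\gamma \in \Gamma$ according to whether $R(\gamma)\stackrel{\rm{def}}{=}d(x_0,\gamma x_0)$ is $\le C\log n$ or $> C\log n$, for a constant $C$ chosen large enough in the course of the argument. In the short-length regime, I use the combinatorial input of Theorem \ref{eq: asymptotique esperance fn}; in the long-length regime, I use only the trivial bound $F_n(\gamma)\le n$ together with Gaussian decay of $H_{\widetilde X}$.

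For the short regime, the Svarc--Milnor inequality gives $\ell_w(\gamma)\le \kappa_1 R(\gamma)+\kappa_2 \lesssim \log n$, so Theorem \ref{eq: asymptotique esperance fn} provides the uniform bound $|\mathbb E_n(F_n(\gamma)-1)|\lesssim \log(n)^A/n$. Factoring this out and using positivity of $H_{\widetilde X}$, the remaining sum is dominated by the full heat trace
\[\sum_{\gamma\in\Gamma}\int_{\mathcal F}H_{\widetilde X}(t,\tilde x,\gamma\tilde x)\,\mathrm d\tilde x = \operatorname{Tr}(\mathrm{e}^{-t\Delta_X}).\]
Since $X$ is compact and connected, $\lambda_0(X)=0$ is a simple eigenvalue and all other eigenvalues are strictly positive, hence $\operatorname{Tr}(\mathrm{e}^{-t\Delta_X})=O(1)$ uniformly in $t\ge 1$. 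The short-regime contribution is therefore $O(\log(n)^A/n)=O(n^{\varepsilon-1})$, independently of the choice of $C$.

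For the long regime, I bound $|\mathbb E_n(F_n(\gamma)-1)|$ trivially by $n$ and invoke the Gaussian estimate \eqref{Gaussian}. Since $d(\tilde x,\gamma \tilde x)\ge R(\gamma)-2\operatorname{diam}(\mathcal F)$ for $\tilde x\in\mathcal F$, one obtains (with any fixed $D>4$)
\[\int_{\mathcal F}H_{\widetilde X}(t,\tilde x,\gamma\tilde x)\,\mathrm d\tilde x \lesssim \mathrm{e}^{-\lambda_0 t}\,\mathrm{e}^{-R(\gamma)^2/(Dt)},\]
up to a multiplicative constant absorbing the diameter shift. Grouping primitives by shells $R(\gamma)\in[k,k+1]$ and applying the Margulis estimate \eqref{eq: margulis}, the long-primitive sum is majorized by
\[n\,\mathrm{e}^{-\lambda_0 t}\int_{C\log n}^\infty \mathrm{e}^{\delta R - R^2/(Dt)}\,\mathrm dR.\]
With $t=\beta\log n$, completing the square places the peak of the integrand at $R^\star=\delta D\beta(\log n)/2$, with width $\sqrt{Dt}\sim\sqrt{\log n}$. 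Choosing $C>\delta D\beta/2$ puts the entire integration range on the decaying tail, and a standard Gaussian tail estimate yields a factor $\sqrt t\,\exp\!\big(\log n\cdot[\delta^2 D\beta/4-(C-\delta D\beta/2)^2/(D\beta)]\big)$. The bracketed exponent tends to $-\infty$ as $C\to\infty$, so the long-regime contribution is $O(n^{-M})$ for arbitrarily large $M$, in particular $O(n^{\varepsilon-1})$.

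The main conceptual point is that on the tail $R\gg \delta Dt/2$ the quadratic decay of the heat kernel dominates the exponential $\mathrm{e}^{\delta R}$ growth of $\Gamma$-orbits; there is no genuine tension in the choice of $C$ since the short estimate holds for any $C$ while the long estimate only requires $C$ large. The only technical subtlety to check is that the fixed $\operatorname{diam}(\mathcal F)$ shift appearing when passing from $d(\tilde x,\gamma \tilde x)$ to $R(\gamma)$ produces only bounded multiplicative losses, which is immediate since $t=\beta\log n\to\infty$ and $R(\gamma)\to\infty$ in the long regime.
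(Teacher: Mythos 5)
Your proof is correct and follows essentially the same two-regime decomposition as the paper: split at $d(x_0,\gamma x_0)=C\log n$, use Theorem \ref{eq: asymptotique esperance fn} plus boundedness of $\operatorname{Tr}(\mathrm{e}^{-t\Delta_X})$ for $t\ge 1$ in the short regime, and in the long regime use the trivial bound $F_n(\gamma)\le n$, Gaussian decay of the heat kernel, and the orbit-counting estimate \eqref{eq: margulis} to reduce to a Gaussian tail integral killed by choosing $C$ large. The only cosmetic differences are that you carry the extra factor $\mathrm{e}^{-\lambda_0 t}$ and complete the square explicitly, whereas the paper drops it and states the integral bound more directly; the logic is identical.
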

	
	\begin{proof}Let $C>0$, to be fixed later on. We will use Theorem \ref{eq: asymptotique esperance fn} to control the sums over finitely many words with $d(x_0,\gamma x_0)\leq C \log(n)$.
		For primitive elements with  $d(x_0,\gamma x_0)\geq C \log(n)$ we will only use the trivial bound $|\mathbb E_n(F_n(\gamma))-1|\le n$. This will not cause harm because $C$ can be chosen arbitrarily large (but fixed). We fix a point $x_0\in \mathcal F$ and let
		
		\[(\mathrm{I})\stackrel{\rm{def}}{=}\sum_{\gamma \in \Gamma_{\rm p}, d(x_0,\gamma x_0)\le C\log n} |\mathbb E_n(F_n(\gamma)-1))| \int_{\mathcal F}H_{\widetilde X}(t,x,\gamma x) \mathrm dx,\]
		\[(\mathrm{II})\stackrel{\rm{def}}{=}\sum_{\gamma \in \Gamma_{\rm p}, d(x_0,\gamma x_0)\ge C\log n} |\mathbb E_n(F_n(\gamma)-1))| \int_{\mathcal F}H_{\widetilde X}(t,x,\gamma x) \mathrm dx\]
		\subsubsection{Estimating $(\mathrm{I})$} 
		Elements $\gamma\in \Gamma$ involved in the sum $(\mathrm{I})$ satisfy 
		$$d(x_0,\gamma x_0)\le C\log n;$$ we can therefore invoke Theorem \ref{eq: asymptotique esperance fn} to get
		\[(\mathrm{I})\lesssim_{C} \frac{(\log n)^A}{n}\sum_{\gamma \in \Gamma_{\rm p}, d(x_0,\gamma x_0)\le C\log n} \int_{\mathcal F} H_{\widetilde X}(t,x,\gamma x)\mathrm dx.\]
		Now, since the heat kernel is positive, the sum in the right-hand side is bounded by
		\[\sum_{\gamma \in \Gamma} \int_{\mathcal F} H_{\widetilde X}(t,x,\gamma x)\mathrm dx,\]
		which is just the heat trace $\operatorname{Tr}(\mathrm{e}^{-t\Delta_X})$, that is uniformly bounded for $t\ge 1$. Consequently,
		\[(\mathrm{I}) \lesssim_C \frac{(\log n)^A}{n}\lesssim_{C,\varepsilon} n^{\varepsilon-1}.\]
		\subsubsection{Estimating $(\mathrm{II})$} \label{subsubsec: tail heat trace}
		First, note that if $x\in \mathcal F$ and $C$ is large enough then, whenever $d(x_0,\gamma x_0)\ge C \log n$, one has
		\[d(x,\gamma x)\ge d(x_0,\gamma x_0)-2 \mathrm{diam}(\mathcal F)\ge \frac{d(x_0,\gamma x_0)}{2}.\] Using a rough estimate $H_{\widetilde X}(t,x,y)\lesssim \mathrm{e}^{-d(x, y)^2/8t}$ for the heat kernel together with the trivial bound $\big|\mathbb E_n(F_n(\gamma)-1))\big|\le n$ then yields
		\[\mathrm{(II)}\lesssim n \sum_{\gamma \in \Gamma, d(x_0,\gamma x_0)\ge C\log n} \mathrm{e}^{-d(x_0,\gamma x_0)^2/16t}.\]
		We recall from \eqref{eq: margulis} that $\#\{\gamma \in \Gamma\,:\, d(x_0,\gamma x_0)\le R\}\lesssim \mathrm{e}^{\delta R}$. Since $t=\beta\log n$, the sum on the right-hand side is smaller than a constant times
		\[\int_{C \log n}^{+\infty} \exp\big(\delta r-\frac{r^2}{16t}\big)\mathrm dr \le \frac{1}{\frac{C}{16\beta}-\delta } \exp\Big((\delta-\frac{C}{16\beta})C\log n\Big).\]
		One can take $C=C(\delta,\beta)$ large enough to make the right-hand side smaller than $\frac 1n$.

	\end{proof}

	\subsection{The contribution of non-primitive elements} In this section, we prove the estimate on the nonprimitive contributions to the heat trace.
	
	\begin{propo}\label{propo: contribution non primitive} For any $t\ge 1$ we have
		\begin{equation}\label{eq: somme non primitif}\sum_{\gamma \in \Gamma_{\rm n.p.}} \int_{\mathcal F} H_{\widetilde X}(t,x,\gamma x)\mathrm dx \lesssim t^3 \exp (-\lambda_0 t).\end{equation}\end{propo}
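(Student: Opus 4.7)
My plan is to follow the orbit decomposition already used in the proof of Proposition~\ref{prop: comptage non primitif}: writing each nonprimitive element of $\Gamma$ uniquely as a conjugate $g^{-1}\gamma_0^n g$ with $\gamma_0\in\mathcal P$, $n\ge 2$ and $g\in\langle\gamma_0\rangle\backslash\Gamma$, and performing the isometric change of variable $y=gx$ in each integral, the left-hand side of \eqref{eq: somme non primitif} rewrites
\[
\sum_{\gamma\in\Gamma_{\mathrm{n.p.}}}\int_{\mathcal F}H_{\widetilde X}(t,\tilde x,\gamma\tilde x)\,\mathrm d\tilde x
=\sum_{\gamma_0\in\mathcal P}\sum_{n\ge 2}\int_{\mathcal F_{\gamma_0}}H_{\widetilde X}(t,y,\gamma_0^n y)\,\mathrm dy.
\]
For fixed $(\gamma_0,n)$, I would combine the Gaussian bound \eqref{Gaussian} (with some $D>4$) with the lower bound $d(y,\gamma_0^n y)\ge n\ell(\gamma_0)+2s(y)-C_0$ from Lemma~\ref{lem:paulinparkonnen}, where $s(y):=\mathrm{dist}(y,\mathrm{axis}(\gamma_0))$, to get a radial upper bound on the integrand.

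Since the resulting bound depends on $y$ only through $s(y)$, I would apply coarea together with the volume bound $\mu([0,r])\le C\ell(\gamma_0)e^{\delta r}$ of Lemma~\ref{lem:integrale cylindre} and Stieltjes integration by parts, reducing the cylinder integral to a one-dimensional Gaussian integral. The substitution $u=n\ell(\gamma_0)+2r$ and completion of squares yield an exponent of the form $-(u-\delta Dt/4)^2/(Dt)+\delta^2 Dt/16-\delta n\ell(\gamma_0)/2$, from which I would read off
\[
\int_{\mathcal F_{\gamma_0}}H_{\widetilde X}(t,y,\gamma_0^n y)\,\mathrm dy\lesssim \ell(\gamma_0)\sqrt t\, e^{-\delta n\ell(\gamma_0)/2}\,e^{(-\lambda_0+\delta^2 D/16)t}.
\]
Next, I would sum the geometric series $\sum_{n\ge 2}e^{-\delta n\ell(\gamma_0)/2}\lesssim e^{-\delta\ell(\gamma_0)}$ (bounded uniformly in $\gamma_0$ since $\ell(\gamma_0)\ge\ell_0>0$), and then sum over $\gamma_0\in\mathcal P$ via the prime geodesic theorem \eqref{eq:growthgeodesics}, restricted to $\ell(\gamma_0)\lesssim t$ (outside of which the Gaussian factor is exponentially small), which yields the $\sqrt t$ prefactor.

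The hard part will be obtaining the sharp exponent $\delta^2/16$ rather than $\delta^2 D/16$: since \eqref{Gaussian} requires $D>4$, the naive completion of squares only gives $\delta^2/4$. Closing the gap is where Davies's inequality must enter, as indicated in Section~\ref{sec: mainproof}. The natural implementation is to conjugate the heat semigroup by $e^\rho$ with a weight $\rho$ of Lipschitz constant $\alpha=\delta/4$---for instance $\rho(y)=(\delta/4)\,\mathrm{dist}(y,\mathrm{axis}(\gamma_0))$, compatible with the cylinder integration---so that the $L^2$ operator norm of $e^{-\rho}e^{-t\Delta_{\widetilde X}}e^\rho$ is bounded by $e^{-(\lambda_0-\delta^2/16)t}$, exactly the target exponent. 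The polynomial prefactor $\sqrt t$ then emerges from a standard $\varepsilon$-optimization $\alpha=\delta/4+O(1/\sqrt t)$ that balances this $L^2$ bound against the (mildly divergent as $\alpha\searrow\delta/4$) weighted Poincar\'e series $\sum_{\mathrm{n.p.}}e^{-2\alpha d(\gamma x_0,x_0)}$, which converges thanks to the counting estimate from Proposition~\ref{prop: comptage non primitif}.
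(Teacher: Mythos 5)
You correctly identify the two essential inputs --- the nonprimitive counting from Proposition~\ref{prop: comptage non primitif} and a Davies-type argument to beat the $\delta^2/4$ barrier --- but the specific weight you propose does not do what you want. The function $\rho(y)=(\delta/4)\,\mathrm{dist}(y,\mathrm{axis}(\gamma_0))$ is invariant under $\gamma_0$, so conjugating the kernel $H_{\widetilde X}(t,y,\gamma_0^n y)$ by $e^{\pm\rho}$ produces the multiplicative factor $e^{-\rho(y)+\rho(\gamma_0^n y)}\equiv 1$: this gains nothing over the pointwise Gaussian bound you already dismissed. (Moreover the cylinder domain $\mathcal F_{\gamma_0}$ has infinite volume, so one cannot naively pair its indicator function in $L^2$.) What your last sentence is actually gesturing at --- and what does work --- is a single \emph{global} weight $\rho(y)=\alpha\,d(y,x_0)$ applied to the whole nonprimitive sum at once. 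After the preliminary step of replacing each on-diagonal integral $\int_{\mathcal F}H(t,x,\gamma x)\,\mathrm dx$ by an off-diagonal one $\int_{\mathcal F\times\gamma\mathcal F}H(t+1,x,y)\,\mathrm dx\,\mathrm dy$ (e.g. by the Li--Yau parabolic maximum principle, a step your sketch omits), one has
\[
\sum_{\gamma\,\mathrm{n.p.}}\int_{\mathcal F\times\gamma\mathcal F}H_{\widetilde X}(t,x,y)\,\mathrm dx\,\mathrm dy
=\big\langle e^{-t\Delta_{\widetilde X}}\mathbf 1_\Omega,\mathbf 1_{\mathcal F}\big\rangle
\le e^{(\alpha^2-\lambda_0)t}\,\big\|e^{\rho}\mathbf 1_{\mathcal F}\big\|_{L^2}\,\big\|e^{-\rho}\mathbf 1_\Omega\big\|_{L^2},
\]
where $\Omega=\bigsqcup_{\gamma\,\mathrm{n.p.}}\gamma\mathcal F$. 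The squared weight $e^{-2\rho}$ integrated over $\Omega$ gives exactly the nonprimitive Poincar\'e series at exponent $2\alpha$, which Proposition~\ref{prop: comptage non primitif} controls for $\alpha>\delta/4$, and optimizing at $\alpha=\delta/4+\mathcal O(1/t)$ (not $1/\sqrt t$ as you wrote) yields the exponent $\delta^2/16-\lambda_0$ with a polynomial prefactor.

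This is a genuinely different packaging from the paper's. The paper also reduces to off-diagonal integrals, but then groups the translates $\gamma\mathcal F$ into distance shells $\mathcal F_k$, estimates $\mathrm{Vol}(\mathcal F_k)\lesssim e^{\delta k/2}$ via Proposition~\ref{prop: comptage non primitif}, and applies the integral form \eqref{davies inequality} of Davies's inequality to each pair $(\mathcal F,\mathcal F_k)$. The crucial gain there comes from the square root $\sqrt{\mathrm{Vol}(\mathcal F_k)}\sim e^{\delta k/4}$, which is structurally the same halving of the effective exponent that Cauchy--Schwarz produces in your global-weight version. Both routes hinge on the counting result and would collapse to the trivial $\delta^2/4$ bound if applied term by term (equivalently, pointwise rather than in $L^2$); the paper points this out explicitly. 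Your first paragraph, which tries to optimize pointwise Gaussian bounds over the cylinder geometry, is a dead end for exactly this reason --- as you yourself observe --- and contributes nothing to the final argument.
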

		We first make a sequence of reductions. Fix $x_0\in \mathcal F$. Proposition \ref{propo: contribution non primitive} will follow from 
	\begin{propo}\label{propo: contribution non primitive truncated} For any $t\ge 1$ and $R\ge 1$, we have
		\begin{equation}\label{eq: somme non primitif truncated}\sum_{\gamma ~\mathrm{n.p.}, \ d(x_0,\gamma x_0)\le R} \int_{\mathcal F} H_{\widetilde X}(t,x,\gamma x)\mathrm dx \lesssim R^3 \exp (-\lambda_0 t).\end{equation}\end{propo}
	\begin{proof}[Proof that \eqref{eq: somme non primitif} follows from \eqref{eq: somme non primitif truncated}] It is enough to take $R=Ct$ with $C$ large enough in \eqref{eq: somme non primitif truncated}, and estimate the remaining contribution as in \S\ref{subsubsec: tail heat trace} by
		\[\sum_{\gamma\ \mathrm{n.p.}, \ d(x_0,\gamma x_0)\ge Ct} \int_{\mathcal F}H_{\widetilde X}(t,x,\gamma x)\mathrm{d}x\lesssim \mathrm{e}^{-C' t}, \]
		using pointwise Gaussian upper bounds together with the counting estimate \eqref{eq: margulis}. Here $C'$ can be taken $\ge \lambda_0$ provided that $C$ has been taken large enough above.\end{proof}
	Let us make a preliminary remark. Using the Gaussian upper bound $H(t,x,y)\lesssim  \mathrm{e}^{-\lambda_0 t-d(x,y)^2/4t}$ together with the counting estimate $N_{\rm n.p.}(T)\lesssim T^3 \mathrm{e}^{\frac \delta 2 T}$ from \eqref{eq: counting nonprimitive} below, one would get a factor $-\lambda_0+\frac{\delta^2}{4}$ in the exponential in \eqref{eq: somme non primitif}. This gives a rather trivial bound: by a result of Brooks \cite{Brooks2}, for any surface of negative curvature, one has $\lambda_0\le \frac{\delta^2}{4}$, with equality if and only if $\widetilde X$ has constant curvature --- it follows, for example, from the fact that the function $x\mapsto \mathrm{e}^{-s d(x_0,x)}$ belongs to $L^2(\widetilde X)$ whenever $s>\frac{\delta }{2}$.
	
	\bigskip Therefore, in variable negative curvature, achieving the optimal exponential decay of \eqref{eq: somme non primitif} is impossible using only Gaussian upper bounds on the heat kernel. These bounds depend only on the distance, but the level sets of the heat kernel $H(t,x,\cdot)$ may be far from being spheres centered at $x$. Thus, one has to find another strategy to bypass these radial estimates. A seemingly naive yet fruitful idea is to control the heat kernel using the Green kernel, which we introduce here.
	
	\begin{defi} For $\lambda\le \lambda_0$, introduce the \emph{Green kernel}, defined for $x\neq y$ by
		\[G_\lambda(x,y):=\int_0^{+\infty}\mathrm{e}^{\lambda t}H(t,x,y)\mathrm{d}t.\]
		For $\lambda<\lambda_0$, the function $G_\lambda(x,\cdot)$ is $\lambda$-harmonic on $\widetilde X\backslash \{x\}$, meaning that $(\Delta-\lambda)G_\lambda(x,\cdot)=0$. 
		By \cite[Lemma 2.1]{ledrappier2021local}, the Green kernel $G_{\lambda_0}(x,y)$ is finite for all $x\neq y$.
	\end{defi} 
	By the parabolic Harnack inequality of Li and Yau (see \cite[Theorem 5.3.5]{Davies_1989}), whenever $d(x,y)\ge \ell_0$ and $t\ge 1$, one has
	\[H(t,x,y)\lesssim \int_{t-1}^t H(s,x,y)\mathrm{d}s,\]
	where the implied constant is independent of $x,y,t$. Then,
	\[H(t,x,y)\lesssim \mathrm{e}^{-\lambda_0 t}\int_{t-1}^t \mathrm{e}^{\lambda_0 s}H(s,x,y)\mathrm{d}s\lesssim \mathrm{e}^{-\lambda_0 t}G_{\lambda_0}(x,y).\]
	Summing over nonprimitive elements of the groups, one finds
	\[\sum_{\gamma~\mathrm{n.p.}, \ d(x_0,\gamma x_0)\le R} H(t,x,\gamma x)\lesssim \mathrm{e}^{-\lambda_0 t}\sum_{\gamma~\mathrm{n.p.}, \ d(x_0,\gamma x_0)\le R} G_{\lambda_0}(x,\gamma x). \]
	Thus, Proposition \ref{propo: contribution non primitive truncated} will follow from this next result.
	\begin{propo}\label{propo: sum green kernel nonprimitive} For all $R\ge 1$, one has
		\begin{equation}\label{eq: sum green kernel nonprimitive}\sum_{\gamma\ \mathrm{n.p.}, \ d(x_0,\gamma x_0)\le R} \int_{\mathcal F} G_{\lambda_0}(x,\gamma x)\mathrm{d}x\lesssim R^3.\end{equation}\end{propo}
	It may appear that we have made no gain here, but the benefit of working with the Green kernel is that it shares many formal properties with the radial function $\exp(-\frac{\delta}{2} d(x,y))$, which will be sufficient to prove Proposition \ref{propo: sum green kernel nonprimitive}. These are presented in the following subsection.
	
	\subsubsection{Estimates on the Green kernel} We quote from \cite{ledrappier2021local} the tools that will be used in the proof of Proposition \ref{propo: sum green kernel nonprimitive}. The first is a Harnack inequality for $\lambda$-harmonic functions.
	\begin{propo}[Harnack inequality, see {\cite[Proposition 8.3]{ledrappier2021local}}]\label{prop: Harnack} There is a constant $C_0>0$ such that for all $\lambda\in [0,\lambda_0]$, if $f$ is a positive $\lambda$-harmonic function on an open domain $\mathcal D\subset \widetilde X$ then, for all $x\in \mathcal D$ satisfying $d(x,\partial \mathcal D)\ge 1$, one has
		\[\|\nabla \log f(x)\|\le \log C_0\]\end{propo}
	A consequence of the Harnack inequality, used extensively in \cite{ledrappier2021local}, is that for any $x\in \widetilde X$, and $y,z\in \widetilde X\backslash B(x,1)$, one has
	\[G_\lambda(x,y)\le \exp( C_0'd(y,z)) G_\lambda(x,z),\]
	for some other constant $C_0'$. The advantage of working with the Green kernel instead of the heat kernel is that $G_\lambda(x,y)$ enjoys some kind of multiplicativity property along geodesics, by the so-called \emph{Ancona--Gou{\"e}zel inequality} that we recall here.
	\begin{thm}[Ancona--Gou\"ezel inequality, see {\cite[Proposition 2.12 and Theorem 3.2]{ledrappier2021local}}] \label{thm : AnconaGouezel} There are constants $C_1,C_2,R_0>0$ such that the following holds. Let $\lambda\le \lambda_0$. Then, 
		
		\begin{enumerate}
			\item (easy side) For all $x,y,z\in \widetilde X$ with $d(x,y),d(y,z),d(x,z)\ge 1$, one has
			\[G_\lambda(x,z)G_\lambda(z,y)\le C_1 G_\lambda(x,y)\]
			\item (hard side) For all $x,y,z\in \widetilde X$ such that $y$ lies on the geodesic segment $[xz]$ and $d(x,z),d(y,z)\ge R_0$, one has the reverse inequality
			\[G_\lambda(x,y)\le C_2 G_\lambda(x,z)G_\lambda(z,y).\]
		\end{enumerate}
	\end{thm} 
	\begin{rem} ~
		
		\begin{itemize} \item 
			For $\lambda<\lambda_0$, Theorem \ref{thm : AnconaGouezel} is due to Ancona \cite{Ancona1}, whose proof was crucially based on the coercivity of the operator $\Delta-\lambda$. Uniform estimates up to $\lambda=\lambda_0$ were obtained by Ledrappier--Lim \cite{ledrappier2021local} using tools from ergodic theory, following the earlier work of Gou\"ezel for random walks on hyperbolic groups \cite{Gouezel14}. 
			\item At least for $\lambda<\lambda_0$, the hard side of Ancona--Gou\"ezel has the following probabilistic interpretation given in \cite[\S 5.3]{Ancona2}: consider a Brownian motion $\xi_t$ on $\widetilde X$. Let $x,y,z\in \widetilde X$ with $y\in [xz]$ and all distances $d(x,y),d(y,z),d(x,z)\ge 1$. Then, letting $T$ denote the first hitting time of $\partial B(z,1)$, there is a constant $c>0$ such that
			\[\mathbb P_x\big(\exists t\in [0,T], \ \xi_t\in B(y,1)\ | \ T<\infty\big)\ge c.\]
			Here $c$ is independent of $x,y,z$. This reflects the fact that Brownian trajectories tend to follow closely geodesics, which is a prominent feature in hyperbolic geometry.\end{itemize}
	\end{rem}
	An easy consequence of Ancona--Gou\"ezel inequality is the following lemma.
	\begin{lem}\label{lem: green function on the axis is independent of the point} Let $\gamma\in \Gamma\backslash\{\mathrm{id}\}$. Let $a,b$ denote two points on the axis of $\gamma$. Then,
		\[G_{\lambda_0}(a,\gamma a)\asymp G_{\lambda_0}(b,\gamma b).\]
		Here $f\asymp g$ means that there is a constant $C>0$ depending only on $X$ such that $C^{-1}f\le g\le Cf$.\end{lem}
	\begin{proof}Without loss of generality, assume that $a$ is left to $b$ on the axis of $\gamma$. Then, the points $a,b,\gamma a,\gamma b$ are placed in this order on the axis of $\gamma$. Playing with Ancona--Gou\"ezel inequality and Harnack inequality, we have, on the one hand
		\[G_{\lambda_0}(a,\gamma b)\asymp G_{\lambda_0}(a,\gamma a)G_{\lambda_0}(\gamma a,\gamma b),\]
		and on the other hand
		\[G_{\lambda_0}(a,\gamma b)\asymp G_{\lambda_0}(a,b)G_{\lambda_0}(b,\gamma b).\]
		Since $G_{\lambda_0}(\gamma a,\gamma b)=G_{\lambda_0}(a,b)$, we deduce
		\[G_{\lambda_0}(a,\gamma a)\asymp G_{\lambda_0}(b,\gamma b).\]\end{proof}
	\begin{rem}Note that in the proof above we have used both sides of Ancona--Gou\"ezel inequality.\end{rem}
	The following Proposition gives estimates on the $L^2$-mass of the Green kernel on geodesic spheres. It is originally due to Hamenst\"adt \cite{hamenstadt96}.
	\begin{propo}[{\cite[Proposition 2.16]{ledrappier2021local}}] \label{prop: L2 bound green kernel} For all $R\ge 1$ and $\lambda\le \lambda_0$, one has
		\[\int_{S(x,R)} G_\lambda^2(x,y)\mathrm{d}y\lesssim 1.\]
		Here $S(x,R)$ is the geodesic sphere centered at $x$, and $\mathrm{d}y$ denotes the restriction of the Riemannian volume form to $S(x,R)$. Thus, for all $R\ge 0$, one has
		\begin{equation}\label{eq: mass min(1,Green)} \int_{B(x,R)} \min(1,G_\lambda^2(x,y))\mathrm{d}y\lesssim R.\end{equation}
		Now, $\mathrm{d}y$ denotes the Riemannian volume form on $\widetilde X$.\end{propo}
	\begin{rem}For $\lambda<\lambda_0$, one has the stronger result
		\[\int_{\widetilde X\backslash B(x,1)} G_\lambda^2(x,y)\mathrm{d}y<+\infty.\]
		It actually follows from heat kernel estimates only. Indeed, by Fubini,
		\[\int_{\widetilde X\backslash B(x,1)} G_\lambda^2(x,y) \mathrm{d}y=\int_{s,t\ge 0} \mathrm{e}^{\lambda(t+s)}\int_{\widetilde X\backslash B(x,1)} H(t,x,y)H(s,y,x)\mathrm{d}y\mathrm{d}s\mathrm{d}t.\]
		%Using the semigroup property one can always bound
		%\[\int_{\widetilde X\backslash B(x,1)} H(t,x,y)H(s,y,x)\mathrm{d}y\le \int_{\widetilde X} H(t,x,y)H(s,y,x)\mathrm{d}y=H(t+s,x,x).\] 
		%But one can also use the pointwise upper bound $H(t,x,y)\lesssim 1$ (valid because $d(x,y)\ge 1)$ and the fact that the heat kernel has mass $1$ to get
		Using the semigroup property for the heat kernel and the fact that the heat kernel is uniformly bounded for $d(x,y)\ge 1$, one finds
		\[\int_{\widetilde X\backslash B(x,1)} H(t,x,y)H(s,y,x)\mathrm{d}y\lesssim \min(H(t+s,x,x),1)\lesssim \min(\mathrm{e}^{-\lambda_0 (t+s)},1),\]
		where we used the pointwise estimate $H(t,x,x)\lesssim \mathrm{e}^{-\lambda_0 t}$ for $t\ge 1$ in the second inequality. Integrating over $s,t\ge 0$, one gets
		\[\int_{\widetilde X\backslash B(x,1)} G_\lambda^2(x,y)\mathrm{d}y\lesssim \frac{1}{(\lambda-\lambda_0)^2}.\]
		Note that the estimate blows up when $\lambda\to \lambda_0$, and it requires much more work to get uniform estimates as $\lambda$ approaches $ \lambda_0$.
	\end{rem}
	The Harnack inequality combined with the Ancona--Gou\"ezel inequality will allow us to show the following estimate.
	\begin{propo}\label{prop: comparison Green kernels}Let $x\in \widetilde X$ and $\gamma\in \Gamma\backslash\{\mathrm{id}\}$. Let $p(x)$ denote the orthogonal projection of $x$ on the axis of $\gamma$. Then,
		\begin{equation}\label{eq: comparaison green kernel trajectory} G_{\lambda_0}(x,\gamma x)\lesssim G_{\lambda_0}(p(x),\gamma p(x))\cdot \min(1,G_{\lambda_0}^2(p(x),x)).\end{equation}\end{propo}
	The idea behind Proposition \ref{prop: comparison Green kernels} is that the points $p(x)$ and $\gamma p(x)$ lie at a bounded distance from the geodesic segment $[x\gamma x]$, which allows playing with the Harnack and Ancona--Gou\"ezel inequalities to prove \eqref{eq: comparaison green kernel trajectory}.
	
	\begin{proof}[Proof of Proposition \ref{prop: comparison Green kernels}] Let $x\in \widetilde X$, and $\gamma\in \Gamma$ be different from the identity. Let $a$ denote the orthogonal projection of $x$ on the axis of $\gamma$. Let $R_1>0$, to be fixed later.
		
		\bigskip
		(1) We first assume $d(x,a)\le R_1$. In this case, one also has $d(\gamma x,\gamma a)\le R_1$. Since $d(a,\gamma a)\ge \ell_0$, we can apply Harnack inequality twice to show
		\[G_{\lambda_0}(x,\gamma x)\lesssim G_{\lambda_0}(x,\gamma a)\lesssim G_{\lambda_0}(a,\gamma a).\]
		Here the implied constants depend on $R_1$.
		
		\bigskip
		(2) We now assume $d(x,a)\ge R_1$. We let $\tilde a\in [x\gamma x]$ be the orthogonal projection of $a$ on the segment $[x\gamma x]$. 
		
		We claim that there is a constant $C_3=C_3(\kappa,\ell_0)$ such that $d(a,\tilde a)\le C_3$. Indeed, let $d=d(a,\tilde a)$, $L_1=d(x,\tilde a)$ and $L_2=d(\tilde a,\gamma x)$, so that $L_1+L_2=d(x,\gamma x)$. By Lemma \ref{lem: comparison triangles} applied to the triangles $x\tilde aa$ and $a\tilde a\gamma x$, we have a constant $C_1=C_1(\kappa,\ell_0)$ such that
		\[d(x,a)\ge L_1+d-C_1, \qquad d(a,\gamma x)\ge L_2+d-C_1\]
		Now by the triangle inequality, $d(a,\gamma x)\le \ell(\gamma)+d(\gamma a,\gamma x)$ and one finds by summing the above inequalities
		\[2d(x,a)+\ell(\gamma)\ge L_1+L_2+2d-2C=d(x,\gamma x)+2d-2C.\]
		By Lemma \ref{lem:paulinparkonnen}, one has $2d(x,a)+\ell(\gamma)\le d(x,\gamma x)+C_2$ for some $C_2=C_2(\kappa,\ell_0)$ which yields $2d\le 2C_1+C_2$, so one can take $C_3=2C_1+C_2$. Similarly, letting $\widetilde{\gamma a}$ denote the projection of $\gamma a$ on the segment $[x\gamma x]$ we find $d(\gamma a,\widetilde{\gamma a})\le C_3$.
		
		 Since $d(x,a)\ge R_1$, we obtain $d(x,\tilde a)\ge R_1-C_3$ and $d(\gamma x,\widetilde{\gamma a})\ge R_1-C_3$. We take $R_1$ large enough to ensure $R_1-C_3\ge 2R_0$ where $R_0$ is as in Theorem \ref{thm : AnconaGouezel}. If the distance between $\tilde a$ and $\widetilde{\gamma a}$ is $\le R_0$, we replace $\widetilde{\gamma a}$ by a further point $\tilde b$ on the segment $[x\gamma x]$, which satisfies $d(\tilde a,\tilde b)\ge R_0$, $d(\tilde b,\gamma x)\ge R_0$ and $d(\tilde b,\widetilde{\gamma a})\le R_0$. Note that it is always possible because $d(\gamma x,\widetilde{\gamma a})\ge 2R_0$. Then, by the Ancona--Gou\"ezel inequality of Theorem \ref{thm : AnconaGouezel}, one gets
		\[G_{\lambda_0}(x,\gamma x)\lesssim G_{\lambda_0}(x,\tilde a)G_{\lambda_0}(\tilde a,\tilde b)G_{\lambda_0}(\tilde b,\gamma x).\]
		Since $\tilde a$ lies at a bounded distance from $a$, by Proposition \ref{prop: Harnack}, one has $G_{\lambda_0}(x,\tilde a)\lesssim G_{\lambda_0}(x,a)$. Similarly, $G_{\lambda_0}(\tilde b,\gamma x)\lesssim G(\gamma a,\gamma x)$ and $G_{\lambda_0}(\tilde a,\tilde b)\lesssim G_{\lambda_0}(a,\gamma a)$. Since $G_{\lambda_0}(\gamma a,\gamma x)=G_{\lambda_0}(a,x)$, we finally obtain
		\[G_{\lambda_0}(x,\gamma x)\lesssim G_{\lambda_0}^2(x,a) G(a,\gamma a).\]
		Altogether, we have shown
		\[G_{\lambda_0}(x,\gamma x)\lesssim G_{\lambda_0}(a,\gamma a)\cdot \big(\mathbf 1_{d(x,a)\ge R_1} G_{\lambda_0}^2(x,a)+\mathbf 1_{d(x,a)\le R_1}\big).\]
		Since $G_{\lambda_0}(x,y)$ is bounded on the set $\{d(x,y)\ge R_1\}$, the term in parentheses is always $\lesssim 1$. Likewise, $G_{\lambda_0}(x,y)$ is bounded by below on the set $\{d(x,y)\le R_1\}$, thus the term in parentheses is always $\lesssim G_{\lambda_0}^2(x,a)$. This shows
		\[\mathbf 1_{d(x,a)\ge R_1} G_{\lambda_0}^2(x,a)+\mathbf 1_{d(x,a)\le R_1}\lesssim \min(1,G_{\lambda_0}^2(x,a)),\]
		and concludes the proof.
	\end{proof}
	
	As a corollary, one finds
	\begin{cor}\label{cor: comparison Green kernels}Let $\gamma\in \Gamma$ be a primitive element, $a$ be any point on the axis of $\gamma$, and $k\ge 2$ be an integer. Then, for any $x\in \widetilde X$, letting $p(x)$ denote the orthogonal projection of $x$ on the axis of $\gamma$, one has
		\[G_{\lambda_0}(x,\gamma^{k}x) \lesssim G_{\lambda_0}^2(a,\gamma^{\lfloor k/2\rfloor} a)\cdot \min(1,G_{\lambda_0}^2(x,p(x))).\]
		The implied constants in $\lesssim$ are uniform.\end{cor}
	\begin{proof}[Proof of Corollary \ref{cor: comparison Green kernels}]
		By Proposition \ref{prop: comparison Green kernels}, one has
		\[G_{\lambda_0}(x,\gamma^kx)\lesssim G_{\lambda_0}( p(x),\gamma^k p(x))\cdot \min(1,G_{\lambda_0}^2(x,p(x))).\]
		By Lemma \ref{lem: green function on the axis is independent of the point}, we get $G_{\lambda_0}(x,\gamma^k p(x))\lesssim G_{\lambda_0}(a,\gamma^k a)$. 
		
		Now, if $k=2j$, since the points $a, \gamma^j a,\gamma^{2j} a$ are aligned in this order on the axis of $\gamma$, we can use Ancona--Gou\"ezel inequality provided $d(a,\gamma^j a)\ge R_0$ to get
		\begin{equation}\label{eq: application AC} G_{\lambda_0}(a,\gamma^{2j}a)\lesssim G_{\lambda_0}(a,\gamma^{j}a)G_{\lambda_0}(\gamma^j a,\gamma^{2j}a)=G_{\lambda_0}(a,\gamma^j a)^2.\end{equation}
		In the case $d(a,\gamma^j a)\le R_0$, we use the fact that the Green kernel is bounded by positive constants by above and below on the set $\{\ell_0\le d(x,y)\le 2R_0\}$ to still recover \eqref{eq: application AC}.
		
		Similarly, if $k=2j+1$, one has
		\[ \qquad G_{\lambda_0}(a,\gamma^{2j+1}a)\lesssim G_{\lambda_0}(a,\gamma^j a)G_{\lambda_0}(a,\gamma^{j+1}a).\]
		If $d(a,\gamma^j a)\ge R_0$, it follows from Ancona--Gou\"ezel inequality, while if $d(a,\gamma^j a)\le R_0$ it follows from the same argument as above. 
		Lastly, one has $G_{\lambda_0}(a,\gamma^{j+1}a)\lesssim G_{\lambda_0}(a,\gamma^j a)$. If $d(a,\gamma a)\ge R_0$ it follows from Ancona--Gou\"ezel inequality and the boundedness of the Green kernel on $\{d(x,y)\ge 1\}$, since then
		\[G_{\lambda_0}(a,\gamma^{j+1}a)\lesssim G_{\lambda_0}(a,\gamma^ja)G_{\lambda_0}(a,\gamma a)\lesssim G_{\lambda_0}(a,\gamma^j a),\]
		while if $d(a,\gamma a)\le R_0$, it follows directly from Harnack inequality.
	\end{proof}
	
	\begin{rem}As we discussed earlier, all the properties of $G_{\lambda_0}$ listed above also hold for the function $f(x,y):=\mathrm{e}^{-\frac{\delta}{2} d(x,y)}$, where $\delta$ is the critical exponent of $\Gamma$. The easy side of Ancona inequality corresponds to the triangle inequality, while the hard side corresponds to the case of equality in the triangle inequality. The $L^2$ estimate $\int_{S(x,R)}f^2 \lesssim 1$ in Proposition \ref{prop: L2 bound green kernel} mirrors the estimate on the area of spheres $\mathrm{Area}(S(x,R))\lesssim \mathrm{e}^{\delta R}$. Likewise, Corollary \ref{cor: comparison Green kernels} corresponds to the quadrilateral comparison result of Lemma \ref{lem:paulinparkonnen}. \end{rem}

	\subsubsection{Proof of Proposition \ref{propo: sum green kernel nonprimitive}} We now combine the tools of the previous subsection to prove \eqref{eq: sum green kernel nonprimitive}. 
	
	We consider a subset $\mathcal P\subset \Gamma$ of primitive elements, such that any nonprimitive element of $\Gamma$ writes uniquely $g^{-1}\gamma^k g$ for some $\gamma\in \mathcal P$, $k\ge 2$ and $g\in \langle \gamma\rangle \backslash \widetilde X$. We can choose $\mathcal P$ such that the axes of all elements of $\mathcal P$ pass through the fundamental domain $\mathcal F$ (it is possible because $\mathrm{axis}(g^{-1}\gamma g)=g^{-1}\mathrm{axis}(\gamma)$). This allows us to write, for any $x\in \mathcal F$,
	\[\sum_{\gamma\ \mathrm{n.p.}} G_{\lambda_0}(x,\gamma x)\mathbf 1_{d(x_0,\gamma x_0)\le R}= \sum_{\gamma\in \mathcal P} \sum_{k\ge 2}\sum_{g\in \langle \gamma\rangle\backslash \Gamma} G_{\lambda_0}(x,g^{-1}\gamma^k gx)\mathbf 1_{d(x_0,g^{-1}\gamma^k g x_0)\le R}.\]
	Since $x_0\in \mathcal F$ and the axis of any $\gamma\in \mathcal P$ passes through $\mathcal F$, one has 
	\[d(x_0,g^{-1}\gamma^k gx_0)=d(gx_0,\gamma^k gx_0)\ge \ell(\gamma^k) \ge d(x_0,\gamma^k x_0)-\mathrm{diam}(\mathcal F).\]
	Moreover, for any $x\in \mathcal F$, one has $d(x,g^{-1}\gamma^k gx)\ge 
	C(\mathrm{diam}(\mathcal F),\ell_0) d(x_0,g^{-1}\gamma^k gx_0)$. Therefore, there is a constant $C>0$ such that
	\[\sum_{\substack{\gamma\ \mathrm{n.p.} \\ d(x_0,\gamma x_0)\le R}} G_{\lambda_0}(x,\gamma x)\le \sum_{\substack{\gamma\in \mathcal P, \ k\ge 2 \\ d(x_0,\gamma^kx_0)\le CR}}\sum_{g\in \langle \gamma\rangle\backslash \Gamma} G_{\lambda_0}(x,g^{-1}\gamma^k gx)\mathbf 1_{d(x,g^{-1}\gamma^k g x)\le CR}\]
	We integrate over $\mathcal F$, and perform a change of variable $x\mapsto gx$ in every term of the sum on the right-hand side. When $g$ runs over $\langle \gamma\rangle\backslash \Gamma$, the sets $\{g\mathcal F\}$ form a partition --- up to a set of measure zero --- of a fundamental domain for the action of $\langle \gamma\rangle$ on $\widetilde X$, denoted $\mathcal F_\gamma$. Eventually,
	\[\sum_{\substack{\gamma \ \mathrm{n.p.} \\ d(x_0,\gamma x_0)\le R}} \int_{\mathcal F} G_{\lambda_0}(x,\gamma x) \mathrm{d}x\le \sum_{\substack{\gamma \in \mathcal P, \  k\ge 2 \\ d(x_0,\gamma^k x_0)\le CR}}\int_{\mathcal F_\gamma} G_{\lambda_0}(x,\gamma^k x) \mathbf 1_{d(x,\gamma^k x)\le CR}\mathrm{d}x.\]
	Since the integral on the right-hand side does not depend on the fundamental domain $\mathcal F_\gamma$, we can choose the following one: we fix an arbitrary point $a\in \mathrm{axis}(\gamma)\cap \mathcal F$, then let $\mathcal F_\gamma$ be the ensemble of points located between the two geodesic lines that cross the axis of $\gamma$ orthogonally at the points $a$ and $\gamma a$, as described in Figure \ref{fig: fundamental domain gamma}.
	
	\begin{figure} \label{fig: fundamental domain gamma}
		\frame{\includegraphics[width=10cm]{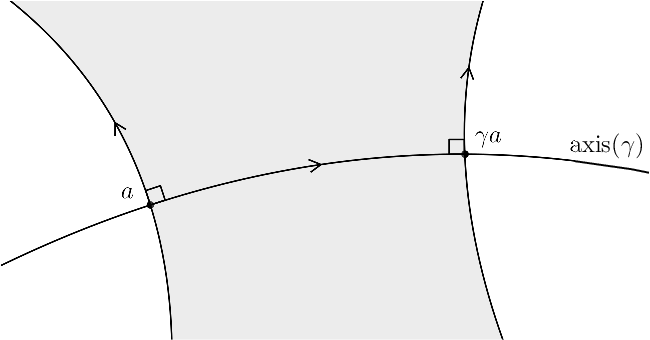}}
		\caption{The shaded area represents the fundamental domain $\mathcal F_\gamma$ for the action of $\langle \gamma\rangle$ on $\widetilde X$.}
	\end{figure} Denote by $p(x)$ the projection of $x$ on the axis of $\gamma$. Then, by Corollary \ref{cor: comparison Green kernels}, one has
	\[G_{\lambda_0}(x,\gamma^k x)\lesssim G_{\lambda_0}^2(a,\gamma^{\lfloor k/2\rfloor }a) \cdot \min(1,G_{\lambda_0}^2(x,p(x))).\]
	Moreover, since $a$ and $x_0$ both belong to $\mathcal F$, Harnack inequality implies
	\[G_{\lambda_0}(a,\gamma^{\lfloor k/2\rfloor} a)\lesssim G_{\lambda_0}(x_0,\gamma^{\lfloor k/2\rfloor} x_0).\]
	Also, by Lemma \ref{lem:paulinparkonnen}, there is a constant $c>0$ such that $d(x,\gamma^k x)\ge c d(x,\mathrm{axis}(\gamma))$. Up to increasing the constant $C$, we find
	\begin{multline}\label{eq: int G lambda x gamma x}
		\int_{\mathcal F_\gamma} G_{\lambda_0}(x,\gamma^k x)\mathbf 1_{d(x,\gamma^k g)\le CR}\mathrm{d}x\\ \lesssim G_{\lambda_0}^2(x_0,\gamma^{\lfloor k/2\rfloor} x_0)\int_{\mathcal F_\gamma} \min(1,G_{\lambda_0}^2(x,p(x))) \mathbf 1_{d(x,p(x))\le CR} \mathrm{d}x.
	\end{multline}
	To estimate the integral in \eqref{eq: int G lambda x gamma x}, place uniformly spaced points $x_1,\ldots,x_N$ on the segment $a\gamma a$, with $N\le \ell(\gamma)\le R$. Let $x\in \mathcal F_\gamma$ be at distance $\le CR$ from the axis of $\gamma$. Then, one can find $i\in\{1,\ldots,N\}$ such that $d(p(x),x_i)\lesssim 1$ (implying $x\in B(x_i,CR+1))$. It follows from Harnack inequality that $G_{\lambda_0}(x,p(x))\lesssim G_{\lambda_0}(x_i,x)$. We infer
	\[\int_{\mathcal F_\gamma} \min(1,G_{\lambda_0}^2(x,p(x)))\mathbf 1_{d(x,p(x))\le CR} \mathrm{d}x\lesssim \sum_{i=1}^N \int_{B(x_i,CR+1)} \min(1,G_{\lambda_0}^2(x_i,x))\mathrm{d}x.\]
	It follows from \eqref{eq: mass min(1,Green)} that
	\[\int_{\mathcal F_\gamma}\min(1,G_{\lambda_0}^2(x,p(x)))\mathbf 1_{ d(x,p(x))\le CR} \mathrm{d}x\lesssim N R\lesssim R^2.	\]
	Recalling \eqref{eq: int G lambda x gamma x}, one finds 
	\[\int_{\mathcal F_\gamma} G_{\lambda_0}(x,\gamma^k x)\mathbf 1_{d(x,\gamma^k x)\le CR}\mathrm{d}x\lesssim R^2G_{\lambda_0}^2(x_0,\gamma^{\lfloor k/2\rfloor} x_0).\]
	Summing over elements of $\mathcal P$ and $k\ge 2$, one obtains
	\[\sum_{\substack{\gamma~\rm n.p. \\ d(x_0,\gamma x_0)\le R}} \int_{\mathcal F_\gamma} G_{\lambda_0}(x,\gamma x) \mathrm{d}x\lesssim R^2\sum_{\substack{\gamma\in \mathcal P,\ k\ge 2\\  d(x_0,\gamma^k x_0)\le CR}} G_{\lambda_0}^2(x_0,\gamma^{\lfloor k/2\rfloor}x_0).\]
	Each nontrivial element of $\Gamma$ appears at most twice on the right-hand side, which is thus bounded by a constant times
	\[R^2\sum_{\substack{\gamma\in \Gamma\backslash \{\mathrm{id}\}\\ d(x_0,\gamma x_0)\le CR}} G_{\lambda_0}^2(x_0,\gamma x_0)\lesssim R^2 \int_{B(x_0,CR)\backslash B(x_0,1)} G_{\lambda_0}^2(x_0,y)\mathrm{d}y\lesssim R^3,\]
	Here we have used Harnack inequality and Proposition \ref{prop: L2 bound green kernel} again. This terminates the proof.
	
	\begin{rem}By replacing $G_{\lambda_0}(x,y)$ by $\exp(-\frac{\delta}{2}d(x,y))$ in the proofs above, one obtains as a byproduct the following counting estimate, which is, to our knowledge, new in variable negative curvature (and might be helpful elsewhere).
		
		\begin{propo}\label{prop: comptage non primitif}Fix $x\in \widetilde X$ and let
			\[N_{\rm n.p.}(x,R)\stackrel{\rm{def}}{=}\# \big\{ \gamma\in \Gamma_{\rm n.p.} \ : \ d(x,\gamma x)\le R\big\}.\]
			Then there is a positive constant $C_x>0$ such that for all $R$ large enough, 
			\begin{equation}\label{eq: counting nonprimitive} N_{\rm n.p.}(x,R)\le C_x R^3\mathrm{e}^{\frac{\delta R}{2}}.\end{equation}
	\end{propo}\end{rem}
	\section{Strong convergence and heat kernels} \label{sec: strong convergence} In the following, the operator norm of a bounded operator $A:H\to H$ is denoted by $\|A\|_H$.
	
	\subsection{About strong convergence}
	We recall that a sequence of finite-dimensional unitary representations $\left(\rho_i,V_i\right)$ of a discrete group $\Gamma$ is said to strongly converge to the (right) regular representation $\left(\lambda_\Gamma,\ell^2(\Gamma)\right)$ if for any $z\in \mathbb{C}\left[\Gamma\right]$,
	\begin{align}\label{eq:strong-conv}
		\lim_{i\to \infty}\|\rho_i(z)\|_{V_i}= \|\lambda_\Gamma(z)\|_{\ell^2(\Gamma)}.
	\end{align} We refer the reader to a recent survey of Magee \cite{Magee_survey} for historical background and an overview of recent developments.
	\\
	Throughout this section we let $X=\Gamma\backslash\widetilde{X}$ be a
	fixed closed Riemannian surface with strictly negative curvature. Let $V_{n}^{0}\stackrel{\rm{def}}{=} \ell_{0}^{2}\left(\left\{ 1,\dots,n\right\} \right)$ be
	the space of square summable functions with $0$-mean. For $\phi\in\textup{Hom}\left(\Gamma,\mathcal S_{n}\right)$
	we have defined \begin{equation}
		\rho_{\phi}\stackrel{\rm{def}}{=}\text{std}_{n-1}\circ\phi \label{eq:factor-th}
	\end{equation}
	on $V_{n}^{0}$
	where we recall that $\text{std}_{n-1}$ is the standard $n-1$ dimensional irreducible
	representation. We write $\mathcal{K}\left(L^{2}\left(X\right)\right)$
	to denote the space of compact operators on $L^{2}\left(X\right)$.
	The following Theorem of Louder and Magee guarantees the existence of strongly convergent representations of $\Gamma$ of the form \eqref{eq:factor-th}.
	\begin{thm}[Louder--Magee, {\cite[Corollary 1.2]{louder2022strongly}}]
		\label{thm:Louder-Magee}There exists a sequence $\left\{ \phi_{i}\right\} _{i\in\mathbb{N}}$
		where $\phi_{i}\in\textup{Hom}\left(\Gamma,\mathcal S_{n_{i}}\right)$ such
		that, letting $\rho_i\stackrel{\rm{def}}{=}\rho_{\phi_i}$, the sequence of representations $\left(\rho_i,V_{n_{i}}^{0}\right)$ converges strongly
		to $\left(\lambda_\Gamma,\ell^{2}(\Gamma)\right)$ as $i\to\infty$. In
		particular, for any finitely supported map $\gamma\mapsto a_{\gamma}\in\mathcal{K}\left(L^{2}\left(X\right)\right)$,
		we have 
		\begin{equation}
			\Big\|\sum_{\gamma\in\Gamma}a_{\gamma}\otimes\rho_{i}(\gamma)\Big\|_{L^{2}\left(X\right)\otimes V_{n_i}^{0}}\to \Big\|\sum_{\gamma\in\Gamma}a_{\gamma}\otimes\lambda_\Gamma(\gamma)\Big\|_{L^{2}\left(X\right)\otimes \ell^2\left(\Gamma\right)}.\label{eq:mat-strong-conv}
		\end{equation}
	\end{thm}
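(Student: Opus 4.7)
The existence of the sequence $(\phi_i)$ such that $(\rho_i,V^0_{n_i})$ converges strongly to the regular representation is the content of Corollary 1.2 of \cite{louder2022strongly}, so the plan is to take this as a black box and focus on deducing the ``in particular'' statement \eqref{eq:mat-strong-conv} from the scalar strong convergence \eqref{eq:strong-conv}.

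The first step is to reduce to finite-rank coefficients. Since the map $\gamma\mapsto a_\gamma$ is finitely supported on some $F\subset \Gamma$, and each compact operator $a_\gamma$ can be approximated in operator norm by finite-rank operators, the triangle inequality combined with the fact that both $\rho_i(\gamma)$ and $\lambda_\Gamma(\gamma)$ are unitary shows that it suffices to prove the convergence when every $a_\gamma$ has finite rank. Indeed, an $\eta$-approximation of each $a_\gamma$ induces an error of at most $|F|\eta$ on both sides of \eqref{eq:mat-strong-conv}, and one can let $\eta\to 0$ at the end.

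When each $a_\gamma$ has finite rank, they all act nontrivially only on a common finite-dimensional subspace $E\subset L^2(X)$ of some dimension $N$. The operator norm of $\sum_\gamma a_\gamma\otimes \rho_i(\gamma)$ on $L^2(X)\otimes V_{n_i}$ then coincides with its operator norm on $E\otimes V_{n_i}$, and upon identifying $B(E)\simeq M_N(\mathbb{C})$ the problem reduces to proving
\[\big\|(\mathrm{id}_{M_N}\otimes \rho_i)(z_0)\big\| \longrightarrow \big\|(\mathrm{id}_{M_N}\otimes \lambda_\Gamma)(z_0)\big\|\qquad \text{as }i\to\infty,\]
for $z_0\stackrel{\rm{def}}{=}\sum_{\gamma\in F}a_\gamma\otimes \gamma \in M_N(\mathbb{C})\otimes \mathbb{C}[\Gamma]$. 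This is a standard consequence of \eqref{eq:strong-conv}. Scalar strong convergence provides an isometric $*$-homomorphism $\pi_\infty:C^*_r(\Gamma)\hookrightarrow \prod_i B(V_{n_i})/\bigoplus_i B(V_{n_i})$ sending $[\gamma]$ to the class of $[\rho_i(\gamma)]_i$. Amplifying by $M_N(\mathbb{C})$ yields an injective $*$-homomorphism $\mathrm{id}_{M_N}\otimes \pi_\infty$ on $M_N(C^*_r(\Gamma))$, which is automatically isometric between $C^*$-algebras; nuclearity of $M_N(\mathbb{C})$ ensures that there is no ambiguity in the $C^*$-norm on $M_N(\mathbb{C})\otimes C^*_r(\Gamma)$, giving the sought convergence of norms.

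The only genuine conceptual obstacle lies in this last tensorial extension, verifying that scalar strong convergence passes to matrix amplifications. This rests on two classical facts from operator algebras, namely that $M_N(\mathbb{C})$ is nuclear and that injective $*$-homomorphisms between $C^*$-algebras are isometric. The reduction to the matrix case is otherwise routine, and the ``in particular'' assertion requires no new ingredient beyond the Louder--Magee theorem and these standard tools.
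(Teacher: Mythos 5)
Your proposal is correct and takes essentially the same route as the paper: cite Louder--Magee for the existence of the strongly convergent sequence, then deduce \eqref{eq:mat-strong-conv} by finite-rank approximation of the compact coefficients followed by matrix amplification of scalar strong convergence. The paper handles the amplification step by a citation to Magee's survey, whereas you spell out the standard $C^*$-algebraic ultraproduct argument; the only point left implicit in your write-up is that the isometric embedding into $\prod_i B(V_{n_i})/\bigoplus_i B(V_{n_i})$ a priori controls the $\limsup$ of $\big\|\sum_\gamma a_\gamma\otimes\rho_i(\gamma)\big\|$ rather than its limit, which one upgrades to a genuine $\lim$ by running the same argument along every subsequence (scalar strong convergence being stable under passing to subsequences) --- a routine addendum, not a gap.
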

	The conclusion \eqref{eq:mat-strong-conv} follows from \eqref{eq:strong-conv} by matrix amplification (e.g. \cite[Proposition 3.3]{Magee_survey}) and approximating on both sides by finite rank operators.
	Given the sequence $\left\{ \phi_{i}\right\} _{i\in\mathbb{N}}$ guaranteed
	by Theorem \ref{thm:Louder-Magee}, we write $X_{i}$ to be the degree
	$n_{i}$ covering space corresponding to $\phi_{i}$. 
	
	\subsection{Function spaces}
	We define $L_{\text{new}}^{2}\left(X_{i}\right)$ to be the space
	of $L^{2}$ functions on $X_{i}$ orthogonal to all lifts of $L^{2}$
	functions from $X$. Then $L^2(X_i)$ splits as the direct sum of two closed orthogonal subspaces
	\[
	L^{2}\left(X_{i}\right)\cong L_{\text{\text{new}}}^{2}\left(X_i\right)\oplus L^{2}\left(X\right).
	\]
	Moreover, the Laplacian acts diagonally on this direct sum. More precisely, The space $L^2_{\rm new}(X_i)\cap \mathrm{Dom}(\Delta_{X_i})$ is preserved by the action of $\Delta_{X_i}$, indeed for any $f$ in this space and $g\in C^\infty(X)$, letting $\pi:X_i\to X$ denote the projection one has, since $\Delta_{X_i}$ is symmetric and $\Delta_{X_i}\circ \pi^*=\pi^* \circ \Delta_X$,
	\[\langle \Delta_{X_i} f, g\circ \pi\rangle_{L^2(X_i)}=\langle f,\Delta_{X_i}(g\circ \pi)\rangle_{L^2(X_i)}=\langle f, (\Delta_X g)\circ \pi\rangle_{L^2(X_i)}=0.\]
	By density, this holds for all $g\in L^2(X)$.
	
	The set of new eigenvalues is exactly given by the spectrum of ${\Delta_{X_i}}_{|L^2_{\rm new}(X_i)}$. In particular, one has
	\[
	\lambda_{1}(X_{i})=\inf\text{\ensuremath{\left(\mathrm{Sp}\left({\Delta_{X_i}}_{|L_{\text{new}}^{2}\left(X_{i}\right)}\right)\right)}},
	\]
	our goal is to prove that
	\begin{equation}
		\lambda_{1}\left(X_{i}\right)\geqslant\lambda_{0}-o_{i\to\infty}(1).\label{eq:goal}
	\end{equation}
	As before, we let $\mathcal{F}$ be a bounded fundamental domain for $X$. Let $C^{\infty}\big(\widetilde{X};V_{n_i}^{0}\big)$
	denote the space of smooth $V_{n_i}^{0}$-valued functions on $\widetilde{X}$. There
	is an isometric linear isomorphism between 
	\[
	C^{\infty}\left(X_{i}\right)\cap L_{\text{\text{new}}}^{2}\left(X_{i}\right),
	\]
	and the space of smooth $V_{n_i}^{0}$-valued functions on $\widetilde{X}$
	satisfying 
	\[
	f(\gamma x)=\rho_{i}(\gamma)f(x),
	\]
	for all $\gamma\in\Gamma$, with finite norm
	\[
	\|f\|_{2}^{2}\stackrel{\rm{def}}{=}\int_{\mathcal{F}}|f(x)|_{V_{n_i}^{0}}^{2}\mathrm d x<\infty.
	\]
	We denote the space of such functions by $C_{\phi_{i}}^{\infty}\big(\widetilde{X};V_{n_i}^{0}\big).$
	The completion of $C_{\phi_{i}}^{\infty}\big(\widetilde{X};V_{n_i}^{0}\big)$
	with respect to $\|\bullet\|_{2}$ is denoted by $L_{\phi_{i}}^{2}\big(\widetilde{X};V_{n_i}^{0}\big)$;
	the isomorphism above extends to one between $L_{\text{\text{new}}}^{2}\big(X_{i}\big)$
	and $L_{\phi_{i}}^{2}\big(\widetilde{X};V_{n_i}^{0}\big)$.

	\subsection{Proof of Theorem \ref{main2}}
	We will now prove Theorem \ref{main2}, using the above tools.
	\begin{proof}
		Consider the heat operator $\exp\left(-t\Delta\right):L_{\text{new}}^{2}\left(X_{i}\right)\to L_{\text{new}}^{2}\left(X_{i}\right)$.

		It is enough for our purposes to just consider time $t=1$. By the spectral theorem, we have
		\[
		\exp\left(-\lambda_{1}\left(X_{i}\right)\right)=\|\exp\left(-\Delta\right)\|_{L_{\text{new}}^{2}\left(X_{i}\right)}.
		\]
		As in \cite[Section 6.1]{HM}  there is an
		isomorphism of Hilbert spaces
		\begin{align*}
			L_{\phi_{i}}^{2}\big(\widetilde{X};V_{n_i}^{0}\big) & \cong L^{2}(\mathcal F)\otimes V_{n_i}^{0};\\
			f\mapsto & \sum_{e_{j}}\langle f\lvert_{\mathcal F},e_{j}\rangle_{V_{n_i}^{0}}\otimes e_{j},
		\end{align*}
		where $\{e_j\}$ is a finite orthonormal basis of $V_{n_i}^0$. Under this isomorphism, one has
		\[
		\exp\left(-\Delta\right)\cong\sum_{\gamma\in\Gamma}a_{\gamma}\otimes\rho_{i}\left(\gamma\right),
		\]
		where $a_{\gamma}  :L^{2}\left(\mathcal F\right)\to L^{2}\left(\mathcal F\right)$ is defined by
		\begin{align*}
			a_{\gamma}[f](x) & \stackrel{\rm{def}}{=}\int_{y\in\mathcal{F}}H_{\widetilde{X}}(x,\gamma y)f(y)~\mathrm dy.
		\end{align*}
		Here $H_{\widetilde{X}}(x,y)\stackrel{\rm{def}}{=} H_{\widetilde{X}}(1,x,y)$ denotes the heat
		kernel at time $t=1$. Introduce a cutoff function
		\[
		\chi_{T}(r)=\begin{cases}
			1 & \text{for \ensuremath{r\in\left[0,T\right]} }\\
			0 & \text{for }r>T
		\end{cases}.
		\]
		Then, split
		\[a_\gamma=a_\gamma^{\chi_T}+a_\gamma^{1-\chi_T},\]
		with
		\[a_\gamma^{\chi_T}[f](x)\stackrel{\rm{def}}{=}\int_{\mathcal{F}}H_{\widetilde{X}}(x,\gamma y)\chi_{T}\left(d(x,\gamma y)\right)f(y)\mathrm dy\]
		and similarly
		\[a_\gamma^{1-\chi_T}[f](x)\stackrel{\rm{def}}{=}\int_{\mathcal{F}}H_{\widetilde{X}}(x,\gamma y)\big(1-\chi_{T})\left(d(x,\gamma y)\right)f(y)\mathrm dy.\]
		It follows that 
		\[
		\|\exp\left(-\Delta_{X_{i}}\right)\|_{L_{\text{new}}^{2}\left(X_{i}\right)}\leqslant\underbrace{\Big\|\sum_{\gamma\in\Gamma}a_{\gamma}^{\chi_{T}}\otimes\rho_i(\gamma)\Big\|_{L^2(\mathcal F)\otimes V_{n_i}^{0}}}_{\rm (a)}+\underbrace{\Big\|\sum_{\gamma\in\Gamma}a_{\gamma}^{1-\chi_{T}}\otimes\rho_i(\gamma)\Big\|_{L^2(\mathcal F)\otimes V_{n_i}^{0}}}_{\rm (b)}.
		\]
		We first bound $\rm{(b)}$. Because each $\rho_{i}$ is unitary, one has
		\begin{equation}
			\Big\|\sum_{\gamma\in\Gamma}a_{\gamma}^{1-\chi_{T}}\otimes\rho_i(\gamma)\Big\|_{L^{2}(\mathcal F)\otimes V_{n_i}^{0}}\leqslant\sum_{\gamma\in\Gamma}\big\|a_{\gamma}^{1-\chi_{T}}\big\|_{L^{2}(\mathcal F)}.\label{eq:op-bound-base}
		\end{equation}
		We recall Schur's inequality : if $A$ is a linear operator on $L^2(\mathcal F)$ with Schwartz kernel $K(x,y)$, then, letting
		\[C_1=\sup_{x\in \mathcal F}\int_{\mathcal F} |K(x,y)|~\mathrm dy, \qquad C_2=\sup_{y\in \mathcal F}\int_{\mathcal F} |K(x,y)|~\mathrm dx,\]
		one has
		\[\|A\|_{L^2(\mathcal F)}\le \sqrt{C_1C_2}.\]
		For the operator $a_\gamma^{1-\chi_T}$, this gives
		\begin{align*}
			\big\|a_{\gamma}^{1-\chi_{T}}\big\|_{L^{2}(\mathcal F)} & \le \sup_{x\in\mathcal{F}}\int_{\mathcal{F}}H_{\widetilde{X}}(x,\gamma y)\left(1-\chi_{T}\right)(d(x,\gamma y))\mathrm dy\\
			& \leqslant\text{Vol}\left(X\right)\cdot\sup_{x,y\in\mathcal{F}}\left(H_{\widetilde{X}}(x,\gamma y)\left(1-\chi_{T}\right)\left(d(x,\gamma y)\right)\right).
		\end{align*}
		By Gaussian upper bounds for $t=1$, we have
		\[
		H_{\widetilde{X}}(x,\gamma y)\lesssim\exp\left(-\frac{d(x,\gamma y)^{2}}{8}\right).
		\]
		Fix $x_0\in \mathcal F$, then for $T$ large enough, since $\mathcal F$ is bounded, we have 
		$$d(x,\gamma y)^2 \mathbf 1_{d(x,y)\ge T}\ge \frac 12 d(x_0,\gamma x_0) \mathbf 1_{d(x_0,y_0)\ge \frac T2},$$ 
		valid for any $x,y\in \mathcal F$, so that
		\[\big\|a_{\gamma}^{1-\chi_{T}}\big\|_{L^{2}(\mathcal F)}\lesssim \mathbf 1_{d(x_0,\gamma x_0)\ge \frac T2}\mathrm{e}^{-\frac{d(x_0,\gamma x_0)^2}{16}}.\]
		Summing over all contributions, it follows that $\mathrm{(b)}$ is bounded by a constant times
		\[\sum_{\gamma \in \Gamma, d(x_0,\gamma x_0)\ge T/2}\mathrm{e}^{-\frac{d(x_0,\gamma x_0)^2}{16}}\]
		This kind of sum has already been estimated in \S \ref{subsec: contribution primitive}, and we get
		\[
		\mathrm{(b)} \lesssim \int_{T/2}^{\infty}\mathrm{e}^{\delta r-\frac{r^{2}}{16}}\mathrm dr\lesssim \mathrm{e}^{-cT^{2}}
		\]
		for some positive constant $c$. 
		
		We now look at $\mathrm{(a)}$. Since for any fixed $T>0$, $a_{\gamma}^{\chi_{T}}$
		is non-zero for only finitely many $\gamma\in\Gamma$, by the conclusion
		of Theorem \ref{thm:Louder-Magee}, we have that
		\[
		\Big\|\sum_{\gamma\in\Gamma}a_{\gamma}^{\chi_{T}}\otimes\rho_i(\gamma)\Big\|_{L^{2}(\mathcal F)\otimes V_{n_i}^{0}}\leqslant\Big\|\sum_{\gamma\in\Gamma}a_{\gamma}^{\chi_{T}}\otimes\lambda_\Gamma(\gamma)\Big\|_{L^{2}(\mathcal F)\otimes V_{n_i}^{0}}\left(1+o_{i\to\infty}(1)\right).
		\]
		As in \cite[Section 6.2]{HM} there is an
		isomorphism of Hilbert spaces 
		\begin{align*}
			L^{2}\left(\mathcal{F}\right)\otimes\ell^{2}\left(\Gamma\right) & \cong L^{2}(\widetilde{X}),\\
			f\otimes\delta_{\gamma} & \mapsto f\circ\gamma^{-1},
		\end{align*}
		with $f\circ\gamma^{-1}$ extended by zero outside of $\gamma\mathcal{F}$.
		Under this isomorphism, the operator $\sum_{\gamma\in\Gamma}a_{\gamma}^{\chi_{T}}\otimes\lambda_\Gamma(\gamma)$
		is conjugated to $\text{Op}_{H_{\widetilde{X}}\chi_{T}}:L^{2}(\widetilde{X})\to L^{2}(\widetilde{X})$
		where
		\[
		[\text{Op}_{H_{\widetilde{X}}\chi_{T}}f](x)\stackrel{\rm{def}}{=}\int_{\widetilde{X}}H_{\widetilde{X}}(x,y)\chi_{T}(d(x,y))f(y)~\mathrm dy.
		\]
		Similarly we define 
		\[
		[\text{Op}_{H_{\widetilde{X}}(1-\chi_{T})}f](x)\stackrel{\rm{def}}{=}\int_{\widetilde{X}}H_{\widetilde{X}}(x,y)\left(1-\chi_{T}\right)(d(x,y))f(y)~\mathrm dy
		\]
		and we see that 
		\begin{align*}\big\|\text{Op}_{H_{\widetilde{X}}\chi_{T}}\big\|_{L^2\left(\widetilde{X}\right)} & =\big\|\exp(-\Delta_{\widetilde{X}})-\text{Op}_{H_{\widetilde{X}}(1-\chi_{T})}\big\|_{L^2\left(\widetilde{X}\right)} \\ &
			\leqslant \big\|\exp(-\Delta_{\widetilde{X}})\big\|_{L^2\left(\widetilde{X}\right)}+\big\|\text{Op}_{H_{\widetilde{X}}(1-\chi_{T})}\big\|_{L^2(\widetilde{X})} \\ & =\mathrm{e}^{-\lambda_{0}(\widetilde{X})}+\big\|\text{Op}_{H_{\widetilde{X}}(1-\chi_{T})}\big\|_{L^2(\widetilde{X})}.
		\end{align*}
		Now it remains to bound $\big\|\text{Op}_{H_{\widetilde{X}}(1-\chi_{T})}\big\|_{L^2(\widetilde{X})}$.
		
		Using Schur's inequality again, one gets 
		\begin{equation}
			\big\|\text{Op}_{H_{\widetilde{X}}(1-\chi_{T})}\big\|_{L^2\left(\widetilde{X}\right)}\leqslant\sup_{x\in\widetilde{X}}\int_{\widetilde{X}}H_{\widetilde{X}}(x,y)\left(1-\chi_{T}\right)(d(x,y))~ \mathrm dy.\label{eq:op-norm-bound}
		\end{equation}
		For each fixed $x\in\widetilde{X}$, using the rough bound $H_{\widetilde X}(x,y)\lesssim \mathrm{e}^{-d(x,y)^2/8}$, we have
		\[\int_{\widetilde{X}}H_{\widetilde{X}}(x,y)\left(1-\chi_{T}\right)(d(x,y))\mathrm dy\le \sum_{k\ge \lfloor T \rfloor} \int_{B(x,k+1)\backslash B(x,k)} \mathrm{e}^{-d(x,y)^2/8} \mathrm dy.\]
		Now each term of the sum is bounded by $\mathrm{Vol}_{\tilde g} (B(x,k+1))\cdot\mathrm{e}^{-k^2/8}\lesssim \mathrm{e}^{\delta k-k^2/8}$ and summing over $k\ge \lfloor T\rfloor$ gives that 
		\[
		\big\|\text{Op}_{H_{\widetilde{X}}(1-\chi_{T})}\big\|_{L^2(\widetilde{X})}\lesssim \mathrm{e}^{-c'T^2},
		\]
		for some positive constant $c'$.
		
		Now for any $\varepsilon>0$ one
		can choose $T>0$ sufficiently large (but fixed) and run the above
		arguments to deduce that for sufficiently large $i$ (given $\varepsilon$)
		\[
		\mathrm{e}^{-\lambda_{1}\left(X_{i}\right)}\leqslant \mathrm{e}^{-\lambda_{0}(\widetilde{X})}\left(1+\varepsilon\right),
		\]
		which implies that 
		\[
		\lambda_{1}\left(X_{i}\right)\geqslant\lambda_{0}(\widetilde{X})-\log\left(1+\varepsilon\right),
		\]
		and thus proving (\ref{eq:goal}).
	\end{proof}

\end{document}